\providecommand{\U}[1]{\protect\rule{.1in}{.1in}}
\providecommand{\U}[1]{\protect\rule{.1in}{.1in}}
\providecommand{\U}[1]{\protect\rule{.1in}{.1in}}
\providecommand{\U}[1]{\protect\rule{.1in}{.1in}}
\providecommand{\U}[1]{\protect\rule{.1in}{.1in}}
\newtheorem{theorem}{Theorem}
\newtheorem{conjecture}[theorem]{Conjecture}
\newtheorem{corollary}[theorem]{Corollary}
\newtheorem{example}[theorem]{Example}
\newtheorem{lemma}[theorem]{Lemma}
\newtheorem{proposition}[theorem]{Proposition}
\newtheorem{remark}[theorem]{Remark}
\theoremstyle{definition}
\numberwithin{equation}{section}
\newcommand{\R}{\mathbb R}
\newcommand{\Z}{\mathbb Z}
\newcommand{\resumename}{R\'esum\'e}
\begin{document}
\date{\today}
\title{HRT Conjecture and linear independence of translates on the Heisenberg group}
\author{B. Currey, V. Oussa}

\begin{abstract}
	We prove that the HRT (Heil, Ramanathan, and Topiwala) conjecture is equivalent to the conjecture that finite translates of square-integrable functions on the Heisenberg group are linearly independent.
\end{abstract}

\maketitle
\section{Preliminaries and overview of the paper} 

Given $x, y \in \R$, define unitary operators. $T_x$ and $M_y$ by
$$
T_xf(t) = f(t-x), \ \ \ M_y f(t) = e^{2\pi i ty} f(t).
$$
The following conjecture known as the HRT conjecture
\cite{heil2006linear,balan2010almost,bownik2013linear,heil2015hrt,benedetto2015linear,okoudjou2017extension,grochenig2015linear}
is an open problem deeply rooted in time-frequency analysis. It was posed about twenty years ago by Chris Heil, Jay Ramanathan, and Pankaj Topiwala in \cite{heil1996linear} as follows
\begin{conjecture}
\label{HRT}(The HRT Conjecture) Let $\phi\in L^{2}\left(\mathbb{R}\right)  ,\phi\neq0,$ and let $\mathcal F$ be a finite subset of $\R^2$. 
%$(x_1,y_1), (x_2 , y_2), \dots , (x_n,y_n)$ be a
Then the set 
$$
\{ M_yT_x\phi : (x,y) \in \mathcal F\}
$$
 is linearly independent in $L^2(\R)$. 

\end{conjecture}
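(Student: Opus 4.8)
The plan is to attack the conjecture through the projective-representation structure of the time-frequency shifts, reducing an arbitrary configuration to a tractable normal form and then invoking operator-algebraic rigidity. First I would record the Weyl commutation relation $T_x M_y = e^{-2\pi i xy} M_y T_x$, which exhibits $(x,y) \mapsto M_y T_x$ as a projective unitary representation of $\R^2$: composing two shifts again produces a shift, up to a unimodular scalar. Consequently a hypothetical nontrivial relation $\sum_{(x,y)\in \mathcal F} c_{(x,y)} M_y T_x \phi = 0$ is, after applying $(M_{y_0}T_{x_0})^{-1}$ for some fixed $(x_0,y_0)\in\mathcal F$, transported to an equally nontrivial relation over the translated configuration $\mathcal F - (x_0,y_0)$, with the coefficients only rephased. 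Hence I may assume without loss of generality that $(0,0)\in\mathcal F$.

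Next I would normalize the geometry of $\mathcal F$ using the metaplectic representation. Each $A \in \mathrm{SL}(2,\R)$ acts by a unitary $\mu(A)$ on $L^2(\R)$ that intertwines $M_y T_x$ with the shift indexed by $A\cdot(x,y)$, up to phase. Applying $\mu(A)$ to the relation therefore transports $\mathcal F$ to $A\cdot\mathcal F$ while replacing $\phi$ by the nonzero vector $\mu(A)\phi$ and preserving linear (in)dependence. Exploiting this $\mathrm{SL}(2,\R)$-freedom together with the translation freedom above, I would move $\mathcal F$ into a standard position, the objective being to place as many of its points as possible on a common lattice $\alpha\Z \times \beta\Z$.

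On a lattice the argument can be closed by operator algebra. The shifts over $\alpha\Z\times\beta\Z$ generate a twisted copy of the group algebra of the integer Heisenberg group inside a $\mathrm{II}_1$ factor carrying a faithful normal trace $\tau$; by Linnell's resolution of the zero-divisor question for this torsion-free group, no nonzero element of that group algebra is a zero divisor, so it cannot annihilate the nonzero vector $\phi$. This disposes of every configuration lying on a single lattice, and in particular recovers the known two- and three-point cases as well as the full rational case.

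The hard part, and precisely the reason the conjecture remains open, is the descent from lattice configurations to genuinely incommensurate ones. After all normalizations a generic four-point set still carries an irrational ratio that no single metaplectic map can rationalize, and the operator-algebraic machinery supplies no faithful trace outside the lattice setting. I would try to bridge this by a limiting argument, approximating the irrational configuration by rational ones and seeking a configuration-uniform lower bound on $\bigl\| \sum c_{(x,y)} M_y T_x \phi \bigr\|$ that survives the passage to the limit. Establishing such a uniform bound, for which no continuity of linear independence is available, is exactly where I expect the plan to stall and where a genuinely new idea appears to be required.
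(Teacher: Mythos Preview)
The statement you are attempting to prove is labeled in the paper as a \emph{Conjecture}, not a theorem; the paper explicitly says ``the HRT conjecture is still unresolved'' and makes no claim to prove it. The paper's contribution is Theorem~\ref{main}, an \emph{equivalence} between HRT and the Heisenberg-Translate conjecture, not a resolution of either.

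Your proposal is therefore not being compared against any proof in the paper, and on its own merits it is not a proof either --- as you yourself concede in the final paragraph. The reductions you outline (translation to put $(0,0)\in\mathcal F$, metaplectic normalization, Linnell's $\mathrm{II}_1$-factor argument for lattice configurations) are standard and correct, but they exhaust the known technology. The decisive gap is exactly the one you name: a generic configuration cannot be moved onto a lattice by any symplectic map, and your proposed limiting argument fails because linear independence is not an open condition. Approximating an irrational configuration by rational ones gives, for each approximant, a strictly positive lower bound on $\bigl\|\sum c_{(x,y)} M_yT_x\phi\bigr\|$, but there is no mechanism preventing these bounds from tending to zero as the approximants converge. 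Absent a uniform quantitative estimate --- which nobody has --- the argument does not close, and the conjecture remains open.
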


\

Although the HRT conjecture is still unresolved, there are quite a few results that might be regarded as evidence for an affirmative answer. One substantial contribution in the literature is due to Linnell. In \cite{linnell1999neumann}, Linnell proves that for nonzero $\phi \in L^2(\R),$ $\{ M_yT_x\phi : (x,y) \in \mathcal F\}$ is linearly independent when $\mathcal F$ is a subset of a full-rank lattice of the time-frequency plane. For a full account of partial results available in the literature, we refer the interested reader to \cite{heil2015hrt}.

\

As is well-known, this conjecture can be recast in terms of the Heisenberg group. First, observe that 
\begin{equation}\label{CCR} 
T_x M_y = e^{-2\pi i xy} M_yT_x
\end{equation}
holds for all $x,y\in \R$. Second, %the usual characterization of closed translation invariant subspaces in $L^2(\R)$ shows that 
the joint action of the operators $T_x$ and $M_y$ is irreducible, in the following sense. 

\begin{lemma} \label{irreducible} Let $\mathcal{H}\subset L^{2}\left(\mathbb{R}\right)  $ be a closed and non-trivial subspace which is stable under all the operators $T_x$ and $M_y, x,y \in \R$. Then $\mathcal H = L^2(\R)$.

\end{lemma}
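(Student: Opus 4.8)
The plan is to show directly that the orthogonal complement $\mathcal{H}^{\perp}$ is trivial, using nothing beyond injectivity of the Fourier transform on $L^{1}(\R)$ and Tonelli's theorem. Assume $\mathcal{H}$ is a closed, non-trivial subspace of $L^{2}(\R)$ stable under every $T_{x}$ and every $M_{y}$. Fix once and for all some $f \in \mathcal{H}$ with $f \neq 0$, and let $g \in \mathcal{H}^{\perp}$ be arbitrary; the goal is to prove $g = 0$. Since $\mathcal{H}$ is closed, $\mathcal{H}^{\perp} = \{0\}$ then gives $\mathcal{H} = (\mathcal{H}^{\perp})^{\perp} = L^{2}(\R)$.

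The key step is a pointwise-in-$x$ vanishing statement. Fix $x \in \R$ and set $h_{x}(t) = f(t-x)\,\overline{g(t)}$. By Cauchy--Schwarz, $h_{x} \in L^{1}(\R)$ with $\|h_{x}\|_{1} \le \|f\|_{2}\|g\|_{2}$. The stability hypothesis gives $M_{y}T_{x}f \in \mathcal{H}$ for every $y \in \R$, hence $\langle M_{y}T_{x}f, g\rangle = 0$; writing this out,
\[
\int_{\R} e^{2\pi i t y}\, h_{x}(t)\, dt = 0 \qquad \text{for every } y \in \R ,
\]
so the Fourier transform of $h_{x}$ vanishes identically. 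By injectivity of the Fourier transform on $L^{1}(\R)$ we get $h_{x} = 0$ a.e., i.e. $|f(t-x)|^{2}\,|g(t)|^{2} = 0$ for a.e.\ $t$, for each fixed $x$.

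To finish, apply Tonelli's theorem to the nonnegative measurable function $(x,t) \mapsto |f(t-x)|^{2}|g(t)|^{2}$ on $\R^{2}$. Integrating in $t$ first and then in $x$ yields $0$, by the previous paragraph. Integrating in $x$ first --- using translation invariance of Lebesgue measure, $\int_{\R}|f(t-x)|^{2}\,dx = \|f\|_{2}^{2}$ --- and then in $t$ yields $\|f\|_{2}^{2}\|g\|_{2}^{2}$. Hence $\|f\|_{2}^{2}\|g\|_{2}^{2} = 0$, and since $f \neq 0$ we conclude $g = 0$, which completes the proof.

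I do not anticipate a real obstacle: the only points needing attention are that $M_{y}T_{x}f$ genuinely lies in $\mathcal{H}$ (which is precisely the stability assumption, applied first to $T_{x}$ and then to $M_{y}$), the $L^{1}$-injectivity of the Fourier transform, and the applicability of Tonelli, all of which are standard. One could instead deduce the lemma from the known irreducibility of the Schr\"odinger representation of the Heisenberg group, or argue operator-theoretically that the projection onto $\mathcal{H}$ commutes with all $M_{y}$ and is therefore multiplication by an indicator $\mathbf{1}_{E}$, after which commuting with all $T_{x}$ forces $E$ to be null or co-null; but the self-contained computation above is shorter and avoids that machinery.
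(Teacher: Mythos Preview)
Your argument is correct and is essentially identical to the paper's own proof: fix a nonzero element of $\mathcal{H}$, take an arbitrary vector in the orthogonal complement, use vanishing of the Fourier transform of the product to kill the cross term for each fixed $x$, and then apply Fubini/Tonelli to factor the double integral as $\|f\|_{2}^{2}\|g\|_{2}^{2}$. The only cosmetic differences are notation and your explicit mention of Cauchy--Schwarz and Tonelli, which the paper leaves implicit.
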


\begin{proof} Fix a nonzero vector $\phi\in\mathcal{H}$ and suppose that $f
\in L^{2}\left(\mathbb{R}\right)$ is orthogonal to the set
$$
\{ M_yT_x \phi: x, y \in \R\} .%EndExpansion
$$
We aim to show that $f$ is the zero element in $L^2(\R)$. Now
\[
\int_{\mathbb{R}}e^{-2\pi ity }\phi\left(  t-x\right)
\overline{f\left(  t\right)  }dt=0
\]
for all $x, y \in \R$, so for each $x \in \R$, %\left(  x,y\right)  \in\mathbb{R}^{2}.$ Thus, the Fourier transform
the Fourier transform of the function $t\mapsto\phi\left(  t-x\right)  \overline{f\left(  t\right)
}$ is identically zero, and hence %, for almost every $t\in\mathbb{R}$ (with respect to the Lebesgue measure) for each real number $x.$ 
%Consequently,
\[
0=\int_{\mathbb{R}}\left(  \left\vert \phi\left(  t-x\right)  \right\vert
^{2}\cdot\left\vert f\left(  t\right)  \right\vert ^{2}\right)  dt\text{ for
all }x\in\mathbb{R}\text{.}%
\]
By Fubini's theorem, %it follows that
\begin{align*}
0  &  =\int_{\mathbb{R}}\left(  \int_{\mathbb{R}}\left\vert \phi\left(
t-x\right)  \right\vert ^{2}\cdot\left\vert f\left(  t\right)  \right\vert
^{2}dt\right)  dx\\
&  =\int_{\mathbb{R}}\left\vert f\left(  t\right)  \right\vert ^{2}%
\cdot\left(  \int_{\mathbb{R}}\left\vert \phi\left(  t-x\right)  \right\vert
^{2}dx\right)  dt\\
^{x\mapsto t-x}  &  =\left(  \int_{\mathbb{R}}\left\vert f\left(  t\right)
\right\vert ^{2}dt\right)  \cdot\left(  \int_{\mathbb{R}}\left\vert
\phi\left(  x\right)  \right\vert ^{2}dx\right)  .
\end{align*}
Since $\phi$ is nonzero, we have $ \| f\|= 0$, as desired. % $f$ must be equal to zero. Thus, for
%a nonzero integer $k,$ each $\pi_{k}$ is an irreducible representation of the
%Heisenberg group.
\end{proof}

That the relation \eqref{CCR} is canonical among jointly irreducible two-parameter families of operators is the content of the Stone-von Neumann Theorem, proved independently by Stone and von Neumann in the late 1920's.

\begin{theorem} \label{SVN} {\rm (Stone-von Neumann)} Let $x \mapsto A_x$ and $y \mapsto B_y$ be unitary representations of the additive group $\R$ acting in a Hilbert space $\mathcal H$ such that
for each $x, y \in \R$, 
\begin{equation}\label{CCR2}
A_x B_y  = e^{-2\pi i  xy} B_y A_x.
\end{equation}
Suppose further that
 $\mathcal H$ admits no non-trivial, proper, closed subspace that is invariant under all operators $A_x,B_y, x, y \in \R$. 
Then there is a unitary map $U: \mathcal H \rightarrow L^2(\R)$ such that for all $x, y \in \R$, 
$$
UA_x U^{-1} = T_x, \ \ \ U B_y U^{-1} = M_{y}
$$
\end{theorem}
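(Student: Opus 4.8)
The plan is to run the classical integrated-operator argument: manufacture a canonical rank-one projection out of a Gaussian, and then use the irreducibility hypothesis to transport the given pair $(A_x,B_y)$ onto the Schrödinger pair $(T_x,M_y)$. First I would introduce the Weyl operators $W(x,y):=e^{\pi i xy}A_xB_y$; the symmetrizing phase is chosen so that \eqref{CCR2} becomes the clean relation $W(x,y)W(x',y')=e^{\pi i(x'y-xy')}W(x+x',y+y')$ together with $W(x,y)^{*}=W(-x,-y)$. For a Schwartz function $f$ on $\R^{2}$ set $\rho(f)=\int_{\R^{2}}f(x,y)\,W(x,y)\,dx\,dy$, the integral taken in the strong operator topology. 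From the Weyl relation alone one reads off, by purely formal manipulations, the homomorphism property $\rho(f)\rho(g)=\rho(f\,\natural\,g)$ and $\rho(f)^{*}=\rho(f^{\vee})$, where $\natural$ is the twisted convolution attached to the Weyl relation and $f^{\vee}(x,y)=\overline{f(-x,-y)}$; moreover $\|\rho(f)\|\le\|f\|_{1}$, so $\rho$ extends to $L^{1}(\R^{2})$, and feeding in an approximate identity yields $\rho(f_{n})\to I$ strongly. The crucial observation is that all of these facts are consequences of \eqref{CCR2} alone, hence hold verbatim for $(T_x,M_y)$; write $\rho_{0}$ for the corresponding map in the Schrödinger model (with Weyl operators $W_{0}(x,y)=e^{\pi i xy}T_xM_y$), and recall that $\rho_{0}$ is injective on Schwartz functions, being essentially the Weyl transform. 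Thus any purely symbolic identity $F_{1}=F_{2}$ between functions on $\R^{2}$ can be certified by checking $\rho_{0}(F_{1})=\rho_{0}(F_{2})$ in the concrete model.

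Next comes the Gaussian. There is a normalized Gaussian $\phi_{0}$ on $\R^{2}$ with
\[
\phi_{0}\,\natural\,\phi_{0}=\phi_{0},\qquad \phi_{0}^{\vee}=\phi_{0};
\]
concretely, $\rho_{0}(\phi_{0})$ is the orthogonal projection onto the line $\C g_{0}\subset L^{2}(\R)$ spanned by a unit Gaussian $g_{0}$, and these identities can be read off from that (or verified directly by a Gaussian integral, with careful bookkeeping of the Weyl phases). Hence $P:=\rho(\phi_{0})$ is a self-adjoint idempotent on $\mathcal H$. It is nonzero: the operators $W(z_{1})\,P\,W(z_{2})=\rho\big(\delta_{z_{1}}\,\natural\,\phi_{0}\,\natural\,\delta_{z_{2}}\big)$ are $\rho$ applied to modulated translates of $\phi_{0}$, and as $(z_{1},z_{2})$ ranges over $\R^{2}\times\R^{2}$ one obtains every modulated translate of $\phi_{0}$ (up to a unimodular constant), whose linear span is dense in $L^{1}(\R^{2})$ because the Gaussian vanishes nowhere. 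If $P$ were $0$, then $\rho$ would vanish on this dense set, hence on all of $L^{1}(\R^{2})$, contradicting $\rho(f_{n})\to I$. So $\mathcal H_{0}:=P\mathcal H\neq\{0\}$.

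Then I would pin down $\mathcal H_{0}$ and construct the intertwiner. The symbolic identity $\phi_{0}\,\natural\,\delta_{z}\,\natural\,\phi_{0}=c(z)\,\phi_{0}$, with $c(z)=\langle g_{0},W_{0}(z)g_{0}\rangle$ (a Gaussian times a unimodular phase, $c(0)=1$), is again checked in the model; applying $\rho$ gives $P\,W(z)\,P=c(z)\,P$ for every $z\in\R^{2}$. If $\eta,\eta'\in\mathcal H_{0}$ with $\langle\eta,\eta'\rangle=0$, then $\langle W(z)\eta,\eta'\rangle=\langle P W(z) P\,\eta,\eta'\rangle=c(z)\langle\eta,\eta'\rangle=0$ for all $z$, so $\eta'$ is orthogonal to $\overline{\operatorname{span}}\{W(z)\eta:z\in\R^{2}\}$, a nonzero closed subspace of $\mathcal H$ invariant under all $A_{x},B_{y}$; by the irreducibility hypothesis this subspace is $\mathcal H$, forcing $\eta'=0$. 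Hence $\dim\mathcal H_{0}=1$; fix a unit vector $\eta\in\mathcal H_{0}$. Now define $U$ on the subspace $\operatorname{span}\{W_{0}(z)g_{0}:z\in\R^{2}\}$ — which is dense in $L^{2}(\R)$ by Lemma \ref{irreducible} — by $U\big(W_{0}(z)g_{0}\big)=W(z)\eta$. Using the Weyl relation and $PW(z)P=c(z)P$ one computes
\[
\langle W(z)\eta,W(z')\eta\rangle=e^{\pi i(xy'-x'y)}\,c(z'-z)=\langle W_{0}(z)g_{0},W_{0}(z')g_{0}\rangle ,
\]
literally the same expression in both models; so $U$ preserves inner products on a dense subspace and extends to an isometry $L^{2}(\R)\to\mathcal H$, whose range $\overline{\operatorname{span}}\{W(z)\eta\}$ is a nonzero closed invariant subspace, hence all of $\mathcal H$, so $U$ is unitary. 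By construction $U\,W_{0}(z)\,U^{-1}=W(z)$ for all $z$; evaluating at $z=(x,0)$ and $z=(0,y)$ (where the symmetrizing phase is $1$) gives $U\,T_x\,U^{-1}=A_x$ and $U\,M_y\,U^{-1}=B_y$, so $U^{-1}\colon\mathcal H\to L^{2}(\R)$ is the unitary required by the theorem.

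The crux — and the step I expect to be the main obstacle — is the Gaussian input underlying the two middle paragraphs: the identities $\phi_{0}\,\natural\,\phi_{0}=\phi_{0}$ and $\phi_{0}\,\natural\,\delta_{z}\,\natural\,\phi_{0}=c(z)\phi_{0}$ (equivalently, that $\rho_{0}(\phi_{0})$ is precisely a rank-one projection onto a Gaussian), which require an honest Gaussian computation and meticulous tracking of the symmetrizing phases. Everything surrounding it is soft: strong-operator convergence of the integrated operators, $L^{1}$-continuity and approximate identities, the density in $L^{1}(\R^{2})$ of the modulated translates of a Gaussian, and two applications of the irreducibility hypothesis (together with Lemma \ref{irreducible}) to recognize nonzero closed invariant subspaces as the whole space.
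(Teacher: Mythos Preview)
The paper does not prove this theorem at all: it states Stone--von Neumann as a classical result (attributed to Stone and von Neumann in the late 1920's) and uses it as a black box to derive Corollary~\ref{SVNCor}. So there is no ``paper's own proof'' to compare against.

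That said, your outline is the standard von Neumann integrated-operator argument (as in Folland's \emph{Harmonic Analysis in Phase Space}, which the paper cites), and it is correct in structure. The Weyl relation you wrote down checks out against \eqref{CCR2}, the twisted-convolution formalism and the Gaussian projector $P=\rho(\phi_0)$ are set up correctly, the nonvanishing of $P$ via density of modulated translates of a Gaussian in $L^1(\R^2)$ is the right mechanism, and the two appeals to irreducibility (to force $\dim\mathcal H_0=1$ and then surjectivity of $U$) are placed where they should be. Your own diagnosis of the crux is accurate: the only genuinely computational step is verifying that $\rho_0(\phi_0)$ is the rank-one projection onto $\C g_0$ (equivalently the identities $\phi_0\,\natural\,\phi_0=\phi_0$ and $\phi_0\,\natural\,\delta_z\,\natural\,\phi_0=c(z)\phi_0$), and that does require an honest Gaussian integral with careful phase bookkeeping. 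One small cosmetic point: you build $U:L^2(\R)\to\mathcal H$ and then invoke $U^{-1}$ for the statement; it would read more cleanly to define the intertwiner in the direction the theorem asks for from the outset.
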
 
The three-dimensional Heisenberg group $\mathbb H$ can be defined as a subgroup of unitary operators on $L^2(\R)$: 
$$
\mathbb{H}=  \{ zM_yT_x : y,x \in \R, z \in \mathbb T\}. 
$$
When $\mathbb{H}$ is identified with $\mathbb{T\times R}\times\mathbb{R}$ in the obvious way, the group operation is given by
\[
\left(  z_{1},y_{1},x_{1}\right)  \left(  z_{2},y_{2},x_{2}\right)  =\left(
z_{1}z_{2}e^{-2\pi i\left(  x_{1}y_{2}\right)  },y_{1}+y_{2},x_{1}%
+x_{2}\right),
\]
where $\left(  z_{1},y_{1},x_{1}\right)  ,\left(  z_{2},y_{2},x_{2}\right)
\in\mathbb{T\times R}\times\mathbb{R}\text{.}$
%\[
%\left(  z_{1},y_{1},x_{1}\right)  ,\left(  z_{2},y_{2},x_{2}\right)
%\in\mathbb{T\times R}\times\mathbb{R}\text{.}%
%\]
With the usual topology on $\mathbb{T\times
R}\times\mathbb{R}$, $\mathbb H$ is a
connected topological group with center%
\[
Z=\left\{  \left(  z,0,0\right)  :z\in\mathbb{T}\right\}  .
\]
Moreover, $\mathbb{H}$ is a unimodular group and 
Lebesgue measure on $\mathbb{T}\times\mathbb{R}\times\mathbb{R}$ is a left-invariant measure on the group. We remark that $\mathbb{H}$ is sometimes called the reduced Heisenberg group so as to distinguish it from the simply connected Heisenberg group $\tilde{\mathbb H}=\R^3 $, whose group operation is such that the canonical covering map $(u,y,x) \mapsto (e^{2\pi i u}, y,x)$ is a homomorphism.

%\textcolor{red}{The Stone-von Neumann Theorem lies at the heart of the proof that the dual of $\mathbb H$ is parametrized by $\Sigma$. It's proof is really long and I think beyond what we want to try to do here. In fact, it seems to me that, if the only goal is to provide a proof of our main theorem, then we do not really need to refer to the dual of $\mathbb H$ at all; the irreducible representations $\pi_k$ are sufficient, specifically Proposition \ref{Plancherel extra-light} seems to be sufficient for the proof. In that case, and if we want, we could suppress even the Stone-von Neumann Theorem, as well as the Corollary below. }

%\
%\textcolor{blue}{I am of course happy to follow your suggestion. If you agree, perhaps we can keep the corollary below in the final version. It might be instructive to a non specialist looking to learn about the representation theory of the Heisenberg group. It also provides a nice historical context for this problem.}

%To understand the connection between Conjecture \ref{HRT} and $\mathbb H$, we must refer to the so-called dual space $\hat{\mathbb H}$ of $\mathbb H$. 
Next we recall a few facts about unitary representations of $\mathbb H$. 
A strongly continuous unitary representation $ \pi : \mathbb H \rightarrow \mathcal U(\mathcal H) $, denoted by $(\pi,\mathcal H)$, is said to be irreducible if  $\mathcal H$ admits no non-trivial, proper, closed subspace that is invariant under all operators $\pi(z,y,x)$. As an example, let $k \in \Z\setminus \{0\}$ and for each $(z,y,x) \in \mathbb H$, put
%define the unitary representation $(\pi_k, L^2(\R))$ by
$$
\pi_k(z,y,x) = z^k M_{ky}T_x.
$$
The  relation \eqref{CCR} shows that $(\pi_k, L^2(\R))$ is a homomorphism of $\mathbb H$ into the unitary group $\mathcal U(L^2(\R))$, and it is easy to check that $\pi_k$ is strongly continuous. Lemma \ref{irreducible} shows that $\pi_k$ is irreducible.

Unitary representations $(\pi,\mathcal H)$ and $(\rho,\mathcal K)$ are equivalent if there is a unitary operator $U : \mathcal H \rightarrow \mathcal K$ such that $U\pi(z,y,x) = \rho(z,y,x) U$ holds for all $(z,y,x) \in \mathbb H$. Formally, $\hat{\mathbb H}$ is the space of all equivalence classes of unitary irreducible representations of $\mathbb H$. The following is almost immediate.

%Theorem \ref{SVN} provides a concrete description of $\hat{\mathbb H}$. 

\begin{corollary}\label{SVNCor} Let $\mathcal H$ be a Hilbert space and $ (\pi ,\mathcal H) $ an irreducible unitary representation of $\mathbb H$ such that $\pi |_Z$ is non-trivial. Then there is $k \in \Z\setminus \{0\}$ such that $(\pi, \mathcal H)$ is equivalent with $(\pi_k, L^2(\R))$.

\end{corollary}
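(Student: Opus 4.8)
The plan is to reduce Corollary \ref{SVNCor} to the Stone-von Neumann Theorem \ref{SVN} by extracting from $\pi$ a two-parameter family of operators satisfying the canonical commutation relation \eqref{CCR2}, after first normalizing the central character. First I would examine the restriction $\pi|_Z$ of $\pi$ to the center $Z = \{(z,0,0) : z \in \T\}$. Since $Z$ is central in $\mathbb H$, each $\pi(z,0,0)$ commutes with every $\pi(z',y,x)$, so by Schur's Lemma (which applies because $(\pi,\mathcal H)$ is irreducible) each $\pi(z,0,0)$ is a scalar multiple of the identity; thus there is a character $\chi$ of $\T$ with $\pi(z,0,0) = \chi(z) I$. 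The characters of $\T$ are exactly $z \mapsto z^k$ for $k \in \Z$, so $\pi(z,0,0) = z^k I$ for some integer $k$, and the hypothesis that $\pi|_Z$ is non-trivial forces $k \neq 0$.

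Next I would define the two one-parameter families $A_x = \pi(1,0,x)$ and $B_y = \pi(1, y/k, 0)$ acting on $\mathcal H$. Strong continuity of $\pi$ makes $x \mapsto A_x$ and $y \mapsto B_y$ strongly continuous unitary representations of $\R$. Using the group law in $\mathbb H$ — specifically $(1,0,x)(1,y/k,0) = (e^{-2\pi i x y/k}, y/k, x)$ and $(1,y/k,0)(1,0,x) = (1, y/k, x)$ — together with $\pi(e^{-2\pi i xy/k},0,0) = (e^{-2\pi i xy/k})^k I = e^{-2\pi i xy} I$, one checks that
\[
A_x B_y = \pi\big((1,0,x)(1,y/k,0)\big) = e^{-2\pi i xy}\,\pi\big((1,y/k,0)(1,0,x)\big) = e^{-2\pi i xy} B_y A_x,
\]
which is exactly \eqref{CCR2}. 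Moreover any closed subspace of $\mathcal H$ invariant under all $A_x$ and $B_y$ is invariant under all products of such operators and hence under $\pi(1,y,x)$ for all $x,y$, and also under $\pi(z,0,0) = z^k I$; since these generate all of $\pi(\mathbb H)$, irreducibility of $(\pi,\mathcal H)$ shows no non-trivial proper closed invariant subspace exists. Theorem \ref{SVN} then yields a unitary $U : \mathcal H \to L^2(\R)$ with $U A_x U^{-1} = T_x$ and $U B_y U^{-1} = M_y$.

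Finally I would verify that this same $U$ intertwines $\pi$ with $\pi_k$. Every element of $\mathbb H$ factors as $(z,y,x) = (z,0,0)(1,0,x)(1,y,0)$, and $\pi(1,y,0) = B_{ky}$ by definition of $B$, so $\pi(z,y,x) = z^k A_x B_{ky}$. Conjugating by $U$ gives $U\pi(z,y,x)U^{-1} = z^k T_x M_{ky}$. It remains to observe that $z^k T_x M_{ky} = z^k M_{ky} T_x \cdot e^{\text{(phase)}}$ using \eqref{CCR}; care is needed here with the ordering, so I would instead factor as $(z,y,x) = (z,0,0)(1,y,0)(1,0,x)$ from the start — the group law gives $(1,y,0)(1,0,x) = (1,y,x)$ with no phase — obtaining $\pi(z,y,x) = z^k B_{ky} A_x$ directly, whence $U\pi(z,y,x)U^{-1} = z^k M_{ky} T_x = \pi_k(z,y,x)$. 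This shows $(\pi,\mathcal H)$ is equivalent to $(\pi_k, L^2(\R))$. The one genuinely delicate point, and the place I would be most careful, is the bookkeeping of the cocycle phases in the group law when defining $B_y$ with the factor $1/k$ and when choosing the factorization of $(z,y,x)$; everything else is a direct appeal to Schur's Lemma, the classification of characters of $\T$, and Theorem \ref{SVN}.
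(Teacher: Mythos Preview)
Your proof is correct and follows essentially the same route as the paper's: identify the central character $z\mapsto z^k$ via Schur's Lemma, set $A_x=\pi(1,0,x)$ and $B_y=\pi(1,y/k,0)$, verify \eqref{CCR2}, apply Theorem~\ref{SVN}, and check that the resulting unitary intertwines $\pi$ with $\pi_k$. You are simply more explicit than the paper in verifying joint irreducibility of the $A_x,B_y$ family and in tracking the factorization $(z,y,x)=(z,0,0)(1,y,0)(1,0,x)$ for the final intertwining step, whereas the paper handles that last step with the one-line observation $B_y^k=\pi(1,y,0)$ and $M_y^k=M_{ky}$.
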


\begin{proof} As a consequence of Schur's Lemma, the restriction of $\pi$ to $Z$ consists of unitary scalar operators $\pi(z,0,0) = \varphi(z) \mathbf{Id} |_\mathcal H$. Since $z \mapsto \varphi(z)$ is a non-trivial homomorphism of $\mathbb T$, we have $k\in \Z\setminus \{0\}$ such that $\varphi(z) = z^k, z \in \mathbb T$. Now let $A_x = \pi(0,0,x)$ and $B_y = \pi(0,y/k,0)$. The group operation in $\mathbb H$ shows that \eqref{CCR2} holds for each $x,y$, and hence by Theorem \ref{SVN}, there is $U : \mathcal H \rightarrow L^2(\R)$ with $T_xU = UA_x$ and $M_yU = UB_y$. Since $B_y^k = \pi(0,y,0)$ and $M_y^k = M_{ky}$, we get $U \pi(z,y,x) = \pi_k(z,y,x)U$ as desired. 
\end{proof}

Now suppose that $(\pi,\mathcal H)$ is an irreducible unitary representation of $\mathbb H$ that vanishes on $Z$ and let $p:\mathbb H\rightarrow H/Z$ be the canonical quotient map. Then $\pi$ defines a unitary representation $\overline{\pi}$ of $H/Z$ so that $\pi = \overline{\pi} \circ p$. Since $H/Z$ is just the additive group $\mathbb R^2$, then (again by Schur's Lemma \cite[Proposition 3.5]{MR1397028}) we have $\mathcal H =\mathbb C$ and there is $\omega\in \R^2$ such that 
$$
\pi(z,y,x)= \chi_\omega(z,y,x)=e^{2\pi i \omega\cdot (x,y)}.
$$
Define
\[
\Sigma=\left\{  \chi_{0,\omega}:\omega\in%
%TCIMACRO{\U{211d} }%
%BeginExpansion
\mathbb{R}
%EndExpansion
^{2}\right\}  \overset{\cdot}{\cup}\left\{  \pi_{k}:k\in%
%TCIMACRO{\U{2124} }%
%BeginExpansion
\mathbb{Z}
%EndExpansion
\backslash\left\{  0\right\}  \right\}
\]

%such that

%\begin{itemize}
%\item $\chi_{0,\omega}=\chi_{0,\omega_{1},\omega_{2}}:\mathbb{H}%
%\rightarrow\mathbb{C}$ is a continuous homomorphism defined as
%\[
%\chi_{0,\omega_{1},\omega_{2}}\left(  e^{2\pi i\theta},y,x\right)  =e^{2\pi
%i\left\langle \left(  \omega_{1},\omega_{2}\right)  ,\left(  x,y\right)
%\right\rangle }.
%\]

%\item For each non-zero integer $k,$ $\pi_{k}:\mathbb{H}\rightarrow
%\mathcal{U}\left(  L^{2}\left(
%TCIMACRO{\U{211d} }%
%BeginExpansion
%\mathbb{R}
%EndExpansion
%\right)  \right)  $ is a continuous homomorphism defined as
%\[
%\left[  \pi_{k}\left(  e^{2\pi i\theta},y,x\right)  \phi\right]  \left(
%t\right)  =e^{2\pi ik\theta}e^{-2\pi ikty}\phi\left(  t-x\right)  \text{
%\ \ \ \ \ \ \ \ \ \ \ }\left(  \phi\in L^{2}\left(
%TCIMACRO{\U{211d} }%
%BeginExpansion
%\mathbb{R}
%EndExpansion
%\right)  \right)  .
%\]

%\end{itemize}

\begin{corollary} Each irreducible representation of $\mathbb H$ is equivalent with exactly one element of $\Sigma$.

\end{corollary}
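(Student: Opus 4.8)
The plan is to split the argument into two parts: first, that every irreducible representation of $\mathbb H$ is equivalent with \emph{some} element of $\Sigma$ (existence), and second, that no two distinct elements of $\Sigma$ are equivalent to each other (uniqueness). The existence part is already essentially established in the discussion preceding the statement. Given an irreducible unitary representation $(\pi,\mathcal H)$, the restriction $\pi|_Z$ to the center is, by Schur's Lemma, multiplication by a character $\varphi$ of $\mathbb T$, so either $\varphi$ is trivial or $\varphi(z)=z^k$ for a unique $k\in\Z\setminus\{0\}$. In the first case, $\pi$ factors through $\mathbb H/Z\cong\R^2$, which is abelian, so again by Schur $\mathcal H=\C$ and $\pi=\chi_{0,\omega}$ for a unique $\omega\in\R^2$; in the second case, Corollary \ref{SVNCor} gives that $\pi$ is equivalent with $\pi_k$.

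For the uniqueness part, I would argue that the three families are mutually non-overlapping and that within each family the parameter is an invariant. The characters $\chi_{0,\omega}$ are one-dimensional while each $\pi_k$ acts on the infinite-dimensional space $L^2(\R)$, so no $\chi_{0,\omega}$ is equivalent with any $\pi_k$. Two characters $\chi_{0,\omega}$ and $\chi_{0,\omega'}$ are equivalent iff they are equal (a unitary intertwiner of one-dimensional representations forces the scalars to agree), i.e. iff $\omega=\omega'$. Finally, $\pi_k$ and $\pi_{k'}$ with $k\ne k'$ cannot be equivalent: if $U\pi_k(z,y,x)=\pi_{k'}(z,y,x)U$ for all $(z,y,x)$, then restricting to $Z$ gives $z^k\,U = z^{k'}\,U$ for all $z\in\mathbb T$, which, since $U$ is unitary and hence nonzero, forces $k=k'$. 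Assembling these observations shows each irreducible representation is equivalent with exactly one element of $\Sigma$.

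I do not anticipate a serious obstacle here, since all the representation-theoretic heavy lifting (the Stone--von Neumann Theorem and its Corollary \ref{SVNCor}) has already been done; the remaining content is the bookkeeping that the central character separates the three families and distinguishes the members within each. The one point that deserves a sentence of care is the claim that a character is determined by its equivalence class: this is immediate because on a one-dimensional space a unitary $U$ is a unimodular scalar, which cancels from $U\chi_{0,\omega}(g)=\chi_{0,\omega'}(g)U$ to give $\chi_{0,\omega}=\chi_{0,\omega'}$, hence $\omega=\omega'$. With that, the proof is complete.
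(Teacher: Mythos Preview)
Your proposal is correct and follows the same approach as the paper: existence is taken from the preceding discussion, and uniqueness is argued by showing distinct parameters yield inequivalent representations. You actually supply more detail than the paper does (the dimension argument separating $\chi_{0,\omega}$ from $\pi_k$, and the central-character computation distinguishing $\pi_{k}$ from $\pi_{k'}$), whereas the paper simply asserts these inequivalences without justification.
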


\begin{proof} We have just shown that each unitary irreducible representation is equivalent with some element of $\Sigma$.  It remains to observe that for $k_1 ,k_2\in \mathbb Z\setminus\{0\}$, 
$k_1 \ne k_2$ implies that $\pi_{k_1}$ and $\pi_{k_2}$ are not equivalent. Similarly, $\omega_1 \ne \omega_2$ implies $\chi_{\omega_1}$ and $ \chi_{\omega_2}$ are inequivalent. 
\end{proof}

It is now clear that Conjecture \ref{HRT} is equivalent with the following. 

%The following conjecture known as the HRT conjecture
%\cite{heil2006linear,balan2010almost,bownik2013linear,heil2015hrt,benedetto2015linear,okoudjou2017extension,grochenig2015linear}
%is an open problem deeply rooted in Gabor Analysis 

\begin{conjecture}
\label{HRT2}(Restatement of HRT) Let $k$ be any nonzero
integer. Let $\phi\in L^{2}\left(
%TCIMACRO{\U{211d} }%
%BeginExpansion
\mathbb{R}
%EndExpansion
\right)  ,\phi\neq0,$ and  let $\mathcal F$ be a finite subset of $\mathbb H$ such that the cosets $h Z, h \in \mathcal F$ are distinct. 
Then the set 
$$
\{ \pi_k(h)\phi :h  \in \mathcal F\}
$$
 is linearly independent in $L^2(\R)$. 

%%and let $n$ be a natural number. Then the collection of
%%vectors $\left\{  \pi_{k}\left(  p_{\ell}\right)  \phi:1\leq\ell\leq
%n\right\}  $ where $p_{1}Z,\cdots,p_{n}Z$ are distinct points in
%$\mathbb{H}/Z$ is a linearly independent set in the Hilbert space
%$L^{2}\left(
%TCIMACRO{\U{211d} }%
%BeginExpansion
%\mathbb{R}
%EndExpansion
%\right)  .$
\end{conjecture}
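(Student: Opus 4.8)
The plan is to establish Conjecture \ref{HRT2} by reducing it to the original HRT Conjecture \ref{HRT} via the intertwining operator supplied by Corollary \ref{SVNCor}, or rather by directly unwinding the definition of $\pi_k$. First I would observe that $\pi_k(z,y,x) = z^k M_{ky} T_x$, so that for a finite set $\mathcal F = \{(z_j, y_j, x_j) : j = 1, \dots, N\} \subset \mathbb H$ with distinct cosets $h_j Z$, a linear dependence $\sum_j c_j \pi_k(h_j)\phi = 0$ becomes $\sum_j c_j z_j^k M_{k y_j} T_{x_j}\phi = 0$. Absorbing the unimodular scalar $z_j^k$ into the coefficient $c_j$, this is precisely a linear dependence relation $\sum_j c_j' M_{k y_j} T_{x_j}\phi = 0$ among the time-frequency translates $\{M_{k y_j} T_{x_j}\phi\}$ of $\phi$.

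Next I would check that the hypothesis ``the cosets $h_j Z$ are distinct'' translates exactly into ``the points $(x_j, k y_j) \in \R^2$ are distinct,'' which is the hypothesis needed to apply Conjecture \ref{HRT}. Indeed, two elements $(z_1, y_1, x_1)$ and $(z_2, y_2, x_2)$ of $\mathbb H$ lie in the same coset of $Z = \{(z,0,0)\}$ if and only if $y_1 = y_2$ and $x_1 = x_2$; and since $k \ne 0$, the map $(x,y) \mapsto (x, ky)$ is a bijection of $\R^2$, so distinctness of the cosets is equivalent to distinctness of the pairs $(x_j, k y_j)$. Thus the set $\{M_{k y_j} T_{x_j}\phi : j = 1,\dots,N\}$ is an HRT system associated to the finite set $\mathcal F' = \{(x_j, k y_j) : j\} \subset \R^2$, and Conjecture \ref{HRT} (applied to the nonzero function $\phi$ and the finite set $\mathcal F'$) asserts its linear independence. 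Hence all $c_j' = 0$, so all $c_j = 0$, proving Conjecture \ref{HRT2}. Conversely, specializing $k = 1$ and taking $\mathcal F \subset \mathbb H$ of the form $\{(1, y, x) : (x,y) \in \mathcal F_0\}$ recovers Conjecture \ref{HRT} from Conjecture \ref{HRT2}, so the two are genuinely equivalent, as the paragraph preceding the statement claims.

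There is no serious obstacle here: the content is entirely bookkeeping, matching up the two formulations through the explicit formula for $\pi_k$ and the description of $Z$-cosets. The only point that requires a moment's care is the role of the nonzero integer $k$: one must use $k \ne 0$ both to make $(x,y) \mapsto (x,ky)$ a bijection (so that the coset condition is preserved in both directions) and, implicitly, to guarantee that $\pi_k$ is a well-defined irreducible representation in the first place, as established via Lemma \ref{irreducible} and the relation \eqref{CCR}. I would also remark that the conjecture for a general $k$ is literally no stronger than the case $k = 1$, since scaling the frequency variable by $k$ is an invertible operation on finite subsets of the time-frequency plane; this is why the ``restatement'' is phrased for arbitrary nonzero $k$ yet remains equivalent to the classical statement.
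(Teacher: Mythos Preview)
Your proposal is correct and matches exactly what the paper intends: the paper writes only ``It is now clear that Conjecture~\ref{HRT} is equivalent with the following,'' and you have supplied precisely the bookkeeping (unwinding $\pi_k(z,y,x)=z^kM_{ky}T_x$, absorbing $z_j^k$ into the coefficients, and using $k\ne 0$ to match the coset condition with distinctness of $(x_j,ky_j)$) that this sentence leaves implicit.
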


The purpose of this note is to show that the Conjectures \ref{HRT} and \ref{HRT2} are equivalent with the conjecture that translates in the Heisenberg group are independent. For  $h,k \in\mathbb{H}$ and $F$ in  $C_c(\mathbb H)$, put   
\[
L_k  F\left(  h\right)  =F\left(  k^{-1}h\right)  .
\]
Then for each $k \in \mathbb H$, $L_k$ extends to a unitary operator on $L^2(\mathbb H)$.

\begin{conjecture}
\label{HT}(The Heisenberg-Translate Conjecture) Let $F$ in $L^{2}\left(
\mathbb{H}\right)  ,F\neq0,$ and let $\mathcal F$ be a finite subset of $\mathbb H$, 
such that the cosets $h Z, h \in \mathcal F$ are distinct. 
Then the
collection of vectors $\left\{  L_h  F:  h \in \mathcal F \right\}  $
is linearly independent in $L^{2}\left(  \mathbb{H}\right)  .$

%let $n$ be a natural number. 
%Then the
%collection of vectors $\left\{  L\left(  p_{\ell}\right)  F:1\leq\ell\leq
%n\right\}  $ where $p_{1}Z,\cdots,p_{n}Z$ are distinct points in
%$\mathbb{H}/Z$ is linearly independent in $L^{2}\left(  \mathbb{H}\right)  .$
\end{conjecture}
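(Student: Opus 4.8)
The plan is to establish the linear-independence conclusion of Conjecture \ref{HT} from the HRT conjecture in its representation-theoretic form, Conjecture \ref{HRT2}. The idea is to decompose $L^2(\mathbb{H})$ according to the action of the (compact) center $Z$ and to reduce the independence of Heisenberg translates on each isotypic piece to the independence of the vectors $\{\pi_k(h)\phi\}$. Concretely, expanding $F\in L^2(\mathbb{H})$ in a Fourier series in the central variable $z\in\mathbb{T}$ yields
\[
L^2(\mathbb{H}) = \bigoplus_{k\in\mathbb{Z}}\mathcal{H}_k, \qquad \mathcal{H}_k = \{F : L_{(w,0,0)}F = w^k F \ \text{for all } w\in\mathbb{T}\},
\]
the isotypic subspaces for $Z$. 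Because $Z$ is central, $L_{(w,0,0)}L_h = L_{h}L_{(w,0,0)}$, so each $L_h$ preserves every $\mathcal{H}_k$. Hence, writing $F=\sum_k F_k$, a relation $\sum_{h\in\mathcal{F}} c_h L_h F = 0$ is equivalent to $\sum_{h\in\mathcal{F}} c_h L_h F_k = 0$ for every $k$; since $F\neq 0$, some $F_k\neq 0$, and it suffices to force $c_h=0$ using that one component.

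The heart of the matter is to identify the representation $L|_{\mathcal{H}_k}$ for $k\neq 0$. Here I would show that $\pi_k$ is square-integrable on the reduced Heisenberg group: the Moyal orthogonality relations give
\[
\int_{\mathbb{R}^2}\bigl|\langle \pi_k(1,y,x)\phi,\psi\rangle\bigr|^2\,dy\,dx = |k|^{-1}\,\|\phi\|^2\,\|\psi\|^2,
\]
so the suitably normalized generalized matrix-coefficient map $\phi\otimes\overline{\psi}\mapsto |k|^{1/2}\langle\pi_k(\cdot)\phi,\psi\rangle$ extends to a unitary intertwiner $W_k:\mathcal{H}_{\pi_k}\otimes\overline{\mathcal{H}_{\pi_k}}\to\mathcal{H}_k$ satisfying $W_k(\pi_k(h)\otimes I)=L_h W_k$. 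Equivalently, one may argue abstractly: $L|_{\mathcal{H}_k}$ has central character $w\mapsto w^k$, and since $\mathbb{H}$ is type I and, by Corollary \ref{SVNCor}, $\pi_k$ is the unique irreducible with that central character, $L|_{\mathcal{H}_k}$ must be a multiple of $\pi_k$. Either way one obtains $L|_{\mathcal{H}_k}\cong \pi_k\otimes I_{\mathcal{M}_k}$ for a multiplicity space $\mathcal{M}_k\cong\overline{\mathcal{H}_{\pi_k}}$.

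Granting this identification, the reduction is routine. Fix $k\neq 0$ with $F_k\neq 0$, regard $F_k$ in $\mathcal{H}_{\pi_k}\otimes\mathcal{M}_k$, and choose an orthonormal basis $\{m_j\}$ of $\mathcal{M}_k$, so that $F_k=\sum_j \psi_j\otimes m_j$. Then
\[
0 = \sum_{h\in\mathcal{F}} c_h\, L_h F_k = \sum_j \Bigl(\sum_{h\in\mathcal{F}} c_h\, \pi_k(h)\psi_j\Bigr)\otimes m_j
\]
forces $\sum_{h\in\mathcal{F}} c_h\,\pi_k(h)\psi_j = 0$ for every $j$. Since $F_k\neq 0$, some $\psi_j$ is nonzero, and because the cosets $hZ$, $h\in\mathcal{F}$, are distinct, Conjecture \ref{HRT2} applied to $\psi_j$ gives $c_h=0$ for all $h$. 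The complementary case in which the only nonzero component is $F_0$ is handled elementarily: on $\mathcal{H}_0\cong L^2(\mathbb{R}^2)$ the operators $L_h$ act by translation by the distinct vectors $hZ\in\mathbb{H}/Z=\mathbb{R}^2$, and translates of a nonzero $L^2(\mathbb{R}^2)$ function by distinct vectors are independent (pass to the Fourier transform: a nonzero exponential polynomial vanishes only on a null set and so cannot annihilate $\widehat{F_0}$).

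I expect the main obstacle to be the structural identification $L|_{\mathcal{H}_k}\cong\pi_k\otimes I$. Making it explicit requires verifying the orthogonality relations for $\pi_k$ on $\mathbb{H}$ and checking that the coefficient map $W_k$ maps \emph{onto} all of $\mathcal{H}_k$, or else invoking the type-I direct-integral decomposition together with Corollary \ref{SVNCor} to rule out any multiplicity outside $\pi_k$. Once this fact is secured, the passage through Conjecture \ref{HRT2} and the treatment of the $k=0$ mode are both straightforward, and the distinct-coset hypothesis on $\mathcal{F}$ is exactly what makes Conjecture \ref{HRT2} applicable.
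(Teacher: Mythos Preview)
Your proposal is correct and follows essentially the same architecture as the paper's proof of Lemma~\ref{Converse}: decompose $L^2(\mathbb{H})$ by the central character, identify the $k$-th piece ($k\neq 0$) with a multiple of $\pi_k$, and reduce a dependence among $\{L_hF\}$ to a dependence among $\{\pi_k(h)\psi\}$, while handling $k=0$ via ordinary $\mathbb{R}^2$-translates (the paper invokes Proposition~\ref{second} for this, which is exactly your Fourier/exponential-polynomial argument). The only real difference is packaging: the paper realizes the identification concretely via the explicit integral kernel $K_k^F$ and the isometry $F\mapsto (|k|^{1/2}\pi_k(F))_k$ into $\bigoplus_k \mathbf{HS}(L^2(\mathbb{R}))$ (Proposition~\ref{Plancherel extra-light}), which sidesteps the surjectivity issue you flag, whereas you phrase it abstractly as $L|_{\mathcal{H}_k}\cong \pi_k\otimes I$ via Moyal orthogonality or type-I theory.
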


The following remark due to Rosenblatt \cite{rosenblatt2008linear} shows the necessity of the assumption that the cosets $hZ, h \in \mathcal F$ are distinct. %is in order.

\begin{remark}
%If we are allowed to select points in the center of $\mathbb{H}$ then it is
%not hard to find a function in $L^{2}\left(  \mathbb{H}\right)  $ with
%linearly dependent translates. 
%To see this, choose 
Choose a point $z\in \mathbb T$ %in the center
of $H$ 
such that $z$ has a finite order $n$, and let $K$ be a compact subset
of $\mathbb{H}.$ Put
\[
F =\sum_{\ell=1}^{n}L_{z^{\ell}} 1_{K}.
\]
Then for a fixed natural number $m,$ the following is clearly true
\[
L_{z^m} F=\sum_{\ell=1}^{n}L_{z^{\ell}}  1_{K}=F.
\]

\end{remark}

The primary objective of this note is to prove the following.

\begin{theorem}
\label{main}The HRT conjecture fails if and only if the Heisenberg-Translate
conjecture fails.
\end{theorem}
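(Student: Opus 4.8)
The plan is to set up a correspondence between the two linear-independence statements via the decomposition of the left regular representation $L$ of $\mathbb{H}$ into irreducibles. The essential point is that $L^2(\mathbb{H})$, as a representation of $\mathbb{H}$ under left translation, splits — by the Plancherel theorem for $\mathbb{H}$, or directly via Fourier analysis in the central variable $z$ — as a direct integral/sum over $\Sigma$. Concretely, expanding $F \in L^2(\mathbb{H})$ in a Fourier series in the central circle variable $z$ produces components $F_k$ ($k \in \mathbb{Z}$) on which the center acts by $z^k$; the component $F_0$ lives over the quotient $\mathbb{H}/Z = \mathbb{R}^2$, while for $k \neq 0$ the component $F_k$ transforms under $L$ like a multiple of $\pi_k$. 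Since left translation $L_h$ commutes with the central Fourier decomposition (the center is normal and $L_h$ only affects each isotypic piece within itself), the vectors $\{L_h F : h \in \mathcal{F}\}$ are linearly dependent in $L^2(\mathbb{H})$ if and only if, for some scalars $c_h$ not all zero, $\sum_h c_h L_h F_k = 0$ for every $k$ — and the $k=0$ piece is automatically $Z$-invariant, hence harmless, because the hypothesis forces the cosets $hZ$ to be distinct but says nothing that would make the $\mathbb{R}^2$-translates of $F_0$ dependent unless $F_0$ itself is zero or we restrict attention to the genuinely noncommutative part.

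First I would make precise the isomorphism, for each fixed $k \neq 0$, between the $k$-isotypic subspace of $L^2(\mathbb{H})$ and a multiplicity space tensored with $L^2(\mathbb{R})$ on which $\mathbb{H}$ acts by $\pi_k$. The standard model: a function in the $k$-isotypic part is determined by its restriction to a transversal of $Z$, i.e.\ by a function on $\mathbb{R}^2$, and one rewrites $L^2(\mathbb{R}^2) \cong L^2(\mathbb{R}) \otimes L^2(\mathbb{R})$ so that $L_h$ becomes $\pi_k(h) \otimes \mathbf{Id}$ (this is the familiar statement that each infinite-dimensional irreducible of $\mathbb{H}$ occurs in $L$ with infinite multiplicity). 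Granting this, $\sum_h c_h L_h F_k = 0$ is equivalent to $\sum_h c_h (\pi_k(h) \otimes \mathbf{Id}) \widetilde{F_k} = 0$ where $\widetilde{F_k} \in L^2(\mathbb{R}) \otimes L^2(\mathbb{R})$.

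Now the two directions. If HRT fails, there are $\phi \neq 0$, distinct cosets $hZ$ for $h \in \mathcal{F}$, and scalars $c_h$ not all zero with $\sum_h c_h \pi_k(h)\phi = 0$ (using Conjecture \ref{HRT2}); take $F$ to be (a transversal function representing) $\phi \otimes \psi$ for any fixed $\psi \neq 0$ in the $k$-isotypic piece, so $F \neq 0$ and $\sum_h c_h L_h F = 0$, i.e.\ the Heisenberg-Translate conjecture fails. Conversely, if the Heisenberg-Translate conjecture fails, pick a witnessing $F \neq 0$ with distinct cosets and $\sum_h c_h L_h F = 0$; decompose $F = \sum_k F_k$. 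Some $F_k$ is nonzero. If $F_k \neq 0$ for some $k \neq 0$, then $\sum_h c_h (\pi_k(h)\otimes \mathbf{Id})\widetilde{F_k} = 0$ with $\widetilde{F_k} \neq 0$; writing $\widetilde{F_k} = \sum_j \phi_j \otimes \psi_j$ with $\{\psi_j\}$ orthonormal and some $\phi_{j_0} \neq 0$, pairing in the second factor against $\psi_{j_0}$ gives $\sum_h c_h \pi_k(h)\phi_{j_0} = 0$ with $\phi_{j_0}\neq 0$ — so HRT (in form \ref{HRT2}) fails. The remaining case is that $F = F_0$, i.e.\ $F$ is $Z$-invariant and descends to $\overline{F} \in L^2(\mathbb{R}^2)$ with $\sum_h c_h L_{p(h)}\overline{F} = 0$; here the $p(h) \in \mathbb{R}^2$ are distinct, and one must rule this out. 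This is precisely the HRT statement for the trivial character: linear independence of translates (no modulations) of a nonzero $L^2(\mathbb{R}^2)$ function along distinct points, which follows from Linnell's theorem \cite{linnell1999neumann} (finitely many points in $\mathbb{R}^2$ lie in a common lattice translate structure — or more elementarily, distinct pure translates of a nonzero $L^2$ function are always independent, by a support/Fourier argument). So the $F_0$ case cannot produce a counterexample, completing the reduction.

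I expect the main obstacle to be the bookkeeping in the central Fourier decomposition: making rigorous the claim that $L^2(\mathbb{H})$ decomposes as $L^2(\mathbb{T}) $-graded pieces each isomorphic \emph{equivariantly} to a multiple of $\pi_k$, including the correct normalization so that $L_h$ on the $k$-piece is genuinely $\pi_k(h)\otimes \mathbf{Id}$ and not some twisted version, and handling convergence of the Fourier series in $z$ in $L^2$. A secondary subtlety is the $k = 0$ stratum: one must check it cannot be the sole support of a counterexample, which is where the distinct-cosets hypothesis and the (easy) commutative case of HRT enter. Everything else — the passage from a tensor relation to a relation in a single factor by pairing against a basis vector, and the construction of $F$ from $\phi$ in the forward direction — is routine Hilbert space manipulation.
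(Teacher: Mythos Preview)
Your proposal is correct and follows essentially the same route as the paper: the paper's Proposition~\ref{Plancherel extra-light} is exactly your central Fourier decomposition, phrased as an isometry $\mathcal K \to \bigoplus_{k\ne 0}\mathbf{HS}(L^2(\mathbb R))$ rather than $\bigoplus_{k\ne 0} L^2(\mathbb R)\otimes L^2(\mathbb R)$, and the forward direction is carried by the matrix coefficient $F(h)=\langle\phi,\pi_k(h)\phi\rangle$, which is a simple tensor in your language. For the $k=0$ stratum the paper invokes the Edgar--Rosenblatt result on translates in $L^p(\mathbb R^2)$ for $p\le 4$ (Proposition~\ref{second}); your parenthetical about Linnell and lattices is off (a generic finite subset of $\mathbb R^2$ lies in no lattice), but your alternative Fourier/support argument for independence of pure $L^2$ translates is correct and suffices.
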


Let $C_c(\mathbb H / Z) = \{ F \in C_c(\mathbb H ) : L_zF = F, z \in Z\}$; note that $C_c(\mathbb H )$ projects onto $C_c(\mathbb H / Z) $ by 
$$
P : F\mapsto \int_{\mathbb T} F(\cdot \ (z,0,0)) dz. 
$$
It is easily seen that for $p >1$,  $\| PF\|_p\le \| F\|_p$, so $P$ extends to a continuous map with image $L^p(\mathbb H / Z)$, the closure of $ C_c(\mathbb H / Z)$ in $L^p(\mathbb H)$. 
Of course $L^p(\mathbb H / Z)$ is canonically isomorphic with $L^p(\R^2)$. 
It is worth noting that when $p$ is greater than $4$, the analog of Conjecture
\ref{HT} fails.

%An observation of Chris Heil \cite[Theorem
%9.18]{heil2006linear} %together with the fact that the center of the reduced Heisenberg group is compact 
%gives the following.

\begin{proposition}\cite[Theorem
9.18]{heil2006linear} 
\text{ } \label{second}\text{ }

\begin{enumerate}
\item Let $\mathcal F$ be a finite subset of $\mathbb H$, such that the elements $h Z, h \in \mathcal F$ are distinct elements of $
\mathbb H / Z$.  If $F \in L^p(\mathbb H/ Z)$ is non-zero and $1\leq p\leq4$, then the  collection of vectors
$\left\{  L_h  F: h \in \mathcal F\right\}  $ is linearly
independent.

\item If $4<p\leq\infty$ then there exist $F\in L^{p}\left(  \mathbb{H} / Z
\right)  $ and a finite set $\mathcal F$ of $\mathbb H$, such that the cosets $hZ, h \in \mathcal F$ are distinct, and 
$\{ L_hF : h \in \mathcal F\}$ is linearly dependent.

%elements $\left\{  h_{l}\right\}  _{l=1}^{n}$ $\subset\mathbb{H}$
%such that $h_{1}Z,\cdots,h_{n}Z$ are distinct points in $\mathbb{H}/Z$ and
%$\left\{  L\left(  h_{l}\right)  \right\}  _{l=1}^{n}$ is linearly dependent
%in $L^{p}\left(  \mathbb{H}\right)  .$
\end{enumerate}
\end{proposition}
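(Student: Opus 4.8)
The plan is to push everything down to $\R^{2}$. Since $Z$ is central and the map $(z,y,x)Z\mapsto(y,x)$ identifies $\mathbb H/Z$ with the additive group $\R^{2}$, each $L_{h}$ ($h\in\mathbb H$) carries $L^{p}(\mathbb H/Z)$ into itself and, under the canonical identification $L^{p}(\mathbb H/Z)\cong L^{p}(\R^{2})$, acts as the translation $T_{\bar h}$, where $T_{v}f(t)=f(t-v)$ and $\bar h\in\R^{2}$ is the image of $h$; moreover $h_{1}Z\neq h_{2}Z$ iff $\bar h_{1}\neq\bar h_{2}$. So both parts become statements about finitely many translates of a nonzero function in $L^{p}(\R^{2})$. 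Suppose $\sum_{j=1}^{m}c_{j}T_{v_{j}}G=0$ with the $v_{j}\in\R^{2}$ distinct and not all $c_{j}$ zero; writing $\mu=\sum_{j}c_{j}\delta_{v_{j}}$, this reads $\mu*G=0$, i.e.\ $\widehat{\mu}\,\widehat{G}=0$ on the Fourier side, where $\widehat{\mu}(\xi)=\sum_{j}c_{j}e^{-2\pi i v_{j}\cdot\xi}$. Because the exponentials $e^{-2\pi i v_{j}\cdot\xi}$ are linearly independent, $\widehat{\mu}$ is real-analytic and not identically zero, so its zero set $Z(\widehat{\mu})$ is a real-analytic subset of $\R^{2}$ of dimension $\le1$; in particular it is Lebesgue-null, and off a discrete set it is a real-analytic curve. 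For part (1) with $1\le p\le2$ this already suffices: Hausdorff--Young places $\widehat{G}$ in $L^{p'}(\R^{2})$, and $\widehat{\mu}\,\widehat{G}=0$ together with $\widehat{\mu}\neq0$ a.e.\ forces $\widehat{G}=0$ a.e., hence $G=0$; so no dependence exists.

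The range $2<p\le4$ will carry the weight of part (1). Now $\widehat{G}$ is only a tempered distribution, and $\widehat{\mu}\,\widehat{G}=0$ says precisely that $\operatorname{supp}\widehat{G}\subseteq Z(\widehat{\mu})$, a one-dimensional set. The fact one needs is that a nonzero tempered distribution on $\R^{2}$ carried by a real-analytic set of dimension $\le1$ is never the Fourier transform of a function in $L^{p}(\R^{2})$ with $p\le4$. The mechanism I would use: if $\operatorname{supp}\widehat{G}$ meets the smooth locus of $Z(\widehat{\mu})$, choose a smooth cutoff $\chi$ near such a point so that $\chi\widehat{G}$ is a nonzero compactly supported distribution carried by an arc of the curve; then $G\in L^{p}$ would give $(\chi\widehat{G})^{\vee}=\check{\chi}*G\in L^{p}$, whereas the stationary-phase asymptotics for Fourier transforms of densities on curves show that if the arc has nonvanishing curvature then $|(\chi\widehat{G})^{\vee}(x)|\gtrsim|x|^{-1/2}$ on an open cone of directions, while if the arc is flat or $\chi\widehat{G}$ is of positive transverse order then $(\chi\widehat{G})^{\vee}$ fails to decay in some direction --- in every case $(\chi\widehat{G})^{\vee}\notin L^{4}(\R^{2})$, a contradiction. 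The residual possibility that $\operatorname{supp}\widehat{G}$ lies in the discrete singular locus is handled directly ($\widehat{G}$ is then a locally finite sum of derivatives of point masses, and such a $G$ lies in $L^{4}(\R^{2})$ only if $G=0$). Making this heuristic rigorous uniformly over all distributions supported on $Z(\widehat{\mu})$, not just smooth curve densities, is the one genuinely technical point; the complete argument is \cite[Theorem~9.18]{heil2006linear}, which rests on the classical results of Edgar and Rosenblatt on difference equations over locally compact abelian groups.

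For part (2) I would construct, for each $4<p\le\infty$, a dependence among four translates with distinct cosets. Working in $\R^{2}\cong\mathbb H/Z$, take $v_{1}=0$, $v_{2}=e_{1}$, $v_{3}=e_{2}$, $v_{4}=e_{1}+e_{2}$, so $\widehat{\mu}(\xi)=c_{1}+c_{2}e^{-2\pi i\xi_{1}}+c_{3}e^{-2\pi i\xi_{2}}+c_{4}e^{-2\pi i(\xi_{1}+\xi_{2})}$. Solving $\widehat{\mu}=0$ for $e^{-2\pi i\xi_{2}}$ writes it as a M\"obius function of $z:=e^{-2\pi i\xi_{1}}$; choose the coefficients so that this M\"obius map is a unimodular multiple of a Blaschke factor $B(z)=e^{i\beta}\tfrac{z-a}{1-\overline{a}z}$ with $0<|a|<1$ --- concretely $c_{1}=ae^{i\beta}$, $c_{2}=-e^{i\beta}$, $c_{3}=1$, $c_{4}=-\overline{a}$. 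Then $Z(\widehat{\mu})$ contains the real-analytic curve $\Gamma=\{\xi:\,e^{-2\pi i\xi_{2}}=B(e^{-2\pi i\xi_{1}})\}$, whose slope $d\xi_{2}/d\xi_{1}=(1-|a|^{2})/(1+|a|^{2}-2|a|\cos(2\pi\xi_{1}+\arg a))$ is nonconstant, so $\Gamma$ has nonvanishing curvature along some subarc $\Gamma_{0}$. Let $G$ be the inverse Fourier transform of $\psi\,d\sigma_{\Gamma_{0}}$, with $\psi$ a nonzero smooth bump and $d\sigma$ arclength. By stationary phase $|G(x)|\lesssim|x|^{-1/2}$, so $G$ is a nonzero continuous function lying in $L^{p}(\R^{2})$ for every $p>4$ and in $L^{\infty}$; and $\widehat{\mu}\,\widehat{G}=\widehat{\mu}\,\psi\,d\sigma_{\Gamma_{0}}=0$ since $\widehat{\mu}$ vanishes on $\Gamma_{0}$, i.e.\ $\sum_{j=1}^{4}c_{j}T_{v_{j}}G=0$ with the cosets $v_{j}Z$ pairwise distinct. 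Transported back through $L^{p}(\R^{2})\cong L^{p}(\mathbb H/Z)$, this is the asserted linearly dependent family. (When $p=\infty$ one may alternatively just take $G\equiv1$ and use $T_{v}G-G=0$.)

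The main obstacle, then, is the estimate behind part (1) in the range $2<p\le4$ --- controlling arbitrary tempered distributions supported on the one-dimensional variety $Z(\widehat{\mu})$ --- which we would take from \cite[Theorem~9.18]{heil2006linear}. The reduction to $\R^{2}$, the Hausdorff--Young argument for $1\le p\le2$, and the Blaschke/stationary-phase construction for $4<p\le\infty$ are otherwise routine.
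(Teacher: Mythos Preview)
Your reduction to $\R^{2}$ is exactly what the paper does: it computes $L_{(z_j,y_j,x_j)}F(z,y,x)=F(1,y-y_j,x-x_j)$ for $F\in L^{p}(\mathbb H/Z)$, so that the question becomes linear independence of translates in $L^{p}(\R^{2})$, and then simply invokes \cite[Theorem~9.18]{heil2006linear} (Edgar--Rosenblatt). In that sense your plan and the paper's proof coincide at the structural level.

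Where you diverge is in how much you unpack. The paper gives no argument at all for part~(1) beyond the citation; you supply the Hausdorff--Young argument for $1\le p\le2$ and a stationary-phase heuristic for $2<p\le4$, correctly flagging the latter as the genuine technical obstacle and deferring its rigorous form to the same reference. Your sketch is sound in outline (support of $\widehat G$ in a one-dimensional analytic set, localization, decay incompatible with $L^{4}$), and honestly labeling it a heuristic is appropriate: handling arbitrary tempered distributions on $Z(\widehat\mu)$, not just smooth curve densities, really is the content of the Edgar--Rosenblatt work.

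For part~(2) you construct a different example from the paper's. The paper quotes the explicit Edgar--Rosenblatt function
\[
F(z,y,x)=\int_{1/3}^{2/3}\exp\!\bigl(i\langle(x,y),(\arccos t,\arccos(1-t))\rangle\bigr)\,dt,
\]
which satisfies the five-point discrete Laplace relation $2F=T_{e_1}F+T_{-e_1}F+T_{e_2}F+T_{-e_2}F$; this is the Fourier transform of a density on an arc of the curve $\cos\xi_1+\cos\xi_2=1$. Your Blaschke-factor construction is the same idea carried out on a different curve in the zero set of a different trigonometric polynomial, yielding a four-point relation with complex coefficients rather than a five-point real one. Both are correct; the paper's has the virtue of being a named classical example, while yours makes the curvature/stationary-phase mechanism behind the $|x|^{-1/2}$ decay (hence the threshold $p=4$) more transparent.
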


\begin{proof}
Given $F\in L^{p}\left(  \mathbb{H} / Z\right)  $ and $(z,y,x), (z_j, y_j, x_j) \in \mathbb H$, 
\begin{align*}
F\left(  \left(  z_j,y_{j},x_{j}\right)  ^{-1}\left(  z,y,x\right)  \right)   &  =F\left(  \overline{z}_jz e^{2\pi i x_jy},y-y_{j},x-x_{j}\right) \\
&  =F\left( 1,  y-y_{j},x-x_{j}\right)  .
\end{align*}
Thus, for complex numbers $c_{1},\cdots,c_{n},$
\[
\sum_{j=1}^{n}c_{j}F\left(  \left(  z_j,y_{j},x_{j}\right)
^{-1}\left(  z,y,x\right)  \right)  =0
\]
if and only if
\[
\sum_{j=1}^{n}c_{j}F\left(  1, y-y_{j},x-x_{j}\right)  =0.
\]
The results of this proposition follow from a straightforward application of
\cite[Theorem 9.18]{heil2006linear} which is due to the work of Rosenblatt and
Edgar \cite{edgar1979difference,rosenblatt1995linear}. In fact, a function
satisfying the claim of the second part of the proposition can be constructed
as follows. Define
\[
F\left(   z,y,x\right)
=\int_{1/3}^{2/3}e^{i\left\langle \left[
\begin{array}
[c]{c}%
x\\
y
\end{array}
\right]  ,\left[
\begin{array}
[c]{c}%
\arccos\left(  t\right) \\
\arccos\left(  1-t\right)
\end{array}
\right]  \right\rangle }dt.
\]
 It is shown in
\cite{edgar1979difference} that
\[
2F\left( 1, y,x\right)  =F\left( 1, y,x+1\right)  +F\left( 1, y,x-1\right)
+F\left( 1, y+1,x\right)  +F\left(  1, y-1,x\right)
\]
and $F$ is a continuous function in $L^{p}\left(  \mathbb{R}^{2}\right) = L^{p}\left(  \mathbb{H}/Z\right)  $.
\end{proof}

\

\section{Proof of Theorem \ref{main}}

We begin with a proof of a standard result; see also \cite{corwin2004representations,folland2016harmonic,moore1973square}.

\begin{lemma}
\label{square_cont}Fix $k\in%
%TCIMACRO{\U{2124} }%
%BeginExpansion
\mathbb{Z}\setminus
%EndExpansion
\left\{  0\right\} $ and let $f,g\in L^{2}\left(
%TCIMACRO{\U{211d} }%
%BeginExpansion
\mathbb{R}
%EndExpansion
\right) . $  Then the function $h   \mapsto
\left\langle g,\pi_{k}\left(  h \right)  f\right\rangle $
%Then the function $\left(  e^{2\pi i\theta},y,x\right)  \mapsto
%\left\langle g,\pi_{k}\left(  e^{2\pi i\theta},y,x\right)  f\right\rangle $ is
is continuous and square-integrable on $\mathbb H$.
\end{lemma}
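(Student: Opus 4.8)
The plan is to compute the matrix coefficient $h\mapsto\langle g,\pi_k(h)f\rangle$ explicitly on $\mathbb H\cong\mathbb T\times\R\times\R$ and show that it is, up to a unimodular factor, essentially the usual short-time Fourier transform of $g$ against $f$; continuity and square-integrability then reduce to standard facts about the STFT. First I would write, for $h=(z,y,x)$,
$$
\langle g,\pi_k(z,y,x)f\rangle = \overline{z}^{\,k}\,\langle g, M_{ky}T_x f\rangle = \overline{z}^{\,k}\int_{\R} g(t)\,\overline{e^{2\pi i k y t} f(t-x)}\,dt .
$$
For fixed $k$, continuity in $h$ is then immediate: the dependence on $z$ is continuous (it is just $\overline z^{\,k}$, with $|\langle g,M_{ky}T_xf\rangle|\le\|f\|\|g\|$ bounded), the dependence on $y$ is continuous by dominated convergence (the integrand converges pointwise as $y\to y_0$ and is dominated by $|g(t)f(t-x)|\in L^1$), and the dependence on $x$ is continuous because translation $x\mapsto T_x f$ is continuous in $L^2(\R)$, so $x\mapsto\langle M_{ky}^{-1}g, T_x f\rangle$ is continuous uniformly in $y$. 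Combining these three observations (e.g.\ via a standard $3\varepsilon$ argument, or by noting the map is separately continuous and locally uniformly so in a way that yields joint continuity) gives continuity on all of $\mathbb H$.

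For square-integrability, I would integrate $|\langle g,\pi_k(z,y,x)f\rangle|^2$ against Lebesgue measure $dz\,dy\,dx$ on $\mathbb T\times\R\times\R$. Since $|\overline z^{\,k}|=1$, the $z$-integral contributes only a factor $1$, leaving
$$
\int_{\mathbb H}|\langle g,\pi_k(h)f\rangle|^2\,dh = \int_{\R}\int_{\R}\Bigl|\int_{\R} g(t)\,\overline{f(t-x)}\,e^{-2\pi i k y t}\,dt\Bigr|^2\,dy\,dx .
$$
For each fixed $x$, the inner function $t\mapsto g(t)\overline{f(t-x)}$ lies in $L^1(\R)$, but I want it in $L^2$: indeed by Cauchy--Schwarz $\int|g(t)f(t-x)|^2\,dt\le\|g\|_\infty^2\dots$ — this is the step that needs care, since $g\in L^2$ need not be bounded. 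The correct route is to apply Plancherel on the $y$-variable only after observing that $t\mapsto g(t)\overline{f(t-x)}\in L^2(\R)$ for a.e.\ $x$: by Fubini, $\int_{\R}\int_{\R}|g(t)|^2|f(t-x)|^2\,dt\,dx=\|f\|_2^2\|g\|_2^2<\infty$, so for a.e.\ $x$ the slice is in $L^2(\R)$. Then Plancherel in $y$ (absorbing the constant $|k|$ from the substitution $y\mapsto y/k$) gives
$$
\int_{\R}\Bigl|\int_{\R} g(t)\overline{f(t-x)}e^{-2\pi i kyt}\,dt\Bigr|^2 dy = \frac{1}{|k|}\int_{\R}|g(t)|^2|f(t-x)|^2\,dt ,
$$
and integrating in $x$ yields $\dfrac{1}{|k|}\|f\|_2^2\|g\|_2^2<\infty$.

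The main obstacle is the integrability bookkeeping in the square-integrability half: one must resist the temptation to assume $f$ or $g$ bounded, and instead invoke Fubini--Tonelli to justify that the relevant slices are square-integrable for almost every $x$, which is exactly what makes the Plancherel step legitimate and simultaneously produces the clean final bound $\|f\|_2\|g\|_2/\sqrt{|k|}$. The continuity half is routine once the matrix coefficient is written out, the only mild point being to upgrade separate continuity in $(z,y,x)$ to joint continuity, which follows from the local boundedness of the coefficient together with the strong continuity of $x\mapsto T_xf$ and $y\mapsto M_{ky}g$.
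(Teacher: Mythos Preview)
Your proof is correct and follows essentially the same strategy as the paper: trivialise the $z$-integral, then evaluate the remaining double integral by Plancherel to obtain $|k|^{-1}\|f\|_2^2\|g\|_2^2$. The only tactical difference is that the paper rewrites $\langle g,M_{ky}T_xf\rangle=(M_{-ky}g\ast f^{*})(x)$ and applies Plancherel in the $x$-variable (landing on $\hat g(\xi+ky)\overline{\hat f}(\xi)$), whereas you recognise the inner integral as a Fourier transform in $t$ and apply Plancherel in the $y$-variable; both routes arrive at the same identity. Your treatment of the integrability bookkeeping (using Fubini to see that $t\mapsto g(t)\overline{f(t-x)}\in L^2$ for a.e.\ $x$, together with the automatic $L^1$ bound from Cauchy--Schwarz) is exactly what is needed, and for continuity the paper simply invokes strong continuity of $\pi_k$, which is what your more explicit $3\varepsilon$ argument ultimately amounts to.
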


\begin{proof} The fact that $F:  h \mapsto\left\langle
g,\pi_{k}(h)  f\right\rangle $ is continuous
is a consequence of the strong continuity of the representation $\pi_{k}.$ 
%The fact that $F:\left(  e^{2\pi i\theta},y,x\right)  \mapsto\left\langle
%g,\pi_{k}\left(  e^{2\pi i\theta},y,x\right)  f\right\rangle $ is continuous
%is a consequence of the continuity of the representation $\pi_{k}.$ 
The
square-integrability of $F$ is due to the following straightforward
calculations:%
\begin{align*}
\int_{0}^{1}\int_{%
%TCIMACRO{\U{211d} }%
%BeginExpansion
\mathbb{R}
%EndExpansion
}\int_{%
%TCIMACRO{\U{211d} }%
%BeginExpansion
\mathbb{R}
%EndExpansion
}\left\vert \left\langle g,\pi_{k}\left(  e^{2\pi i\theta},y,x\right)
f\right\rangle \right\vert ^{2}\text{ }dxdyd\theta &  =\int_{0}^{1}\int_{%
%TCIMACRO{\U{211d} }%
%BeginExpansion
\mathbb{R}
%EndExpansion
}\int_{%
%TCIMACRO{\U{211d} }%
%BeginExpansion
\mathbb{R}
%EndExpansion
}\left\vert e^{-2\pi i k\theta}\left\langle g,\pi_{k}\left(  1,y,x\right)
f\right\rangle \right\vert ^{2}\text{ }dxdyd\theta\\
&  =\underset{=1}{\underbrace{\left(  \int_{0}^{1}d\theta\right)  }}\int_{%
%TCIMACRO{\U{211d} }%
%BeginExpansion
\mathbb{R}
%EndExpansion
}\int_{%
%TCIMACRO{\U{211d} }%
%BeginExpansion
\mathbb{R}
%EndExpansion
}\left\vert \left\langle g,\pi_{k}\left(  1,y,x\right)
f\right\rangle \right\vert ^{2}\text{ }dxdy\\
&  =\int_{%
%TCIMACRO{\U{211d} }%
%BeginExpansion
\mathbb{R}
%EndExpansion
}\int_{%
%TCIMACRO{\U{211d} }%
%BeginExpansion
\mathbb{R}
%EndExpansion
}\left\vert \left\langle g,M_{ky}T_x f\right\rangle
\right\vert ^{2}\text{ }dxdy.
\end{align*}

Now

%Since
%\[
%\pi_{k}\left(  1,y,x\right)  =\pi_{k}\left(  1,y,0\right)  \circ\pi_{k}\left(
%1,0,x\right)  ,
%\]
%it follows that%
\begin{align*}
\int_{%
%TCIMACRO{\U{211d} }%
%BeginExpansion
\mathbb{R}
%EndExpansion
}\int_{%
%TCIMACRO{\U{211d} }%
%BeginExpansion
\mathbb{R}
%EndExpansion
}\left\vert \left\langle g,M_{ky}T_x
f\right\rangle \right\vert ^{2}\text{ }dxdy&  %=\int_{%
%TCIMACRO{\U{211d} }%
%BeginExpansion
%\mathbb{R}
%EndExpansion
%}\int_{%
%TCIMACRO{\U{211d} }%
%BeginExpansion
%\mathbb{R}
%EndExpansion
%}\left\vert \left\langle \pi_{k}\left(  1,-y,0\right)  g,\pi_{k}\left(
%1,0,x\right)  f\right\rangle \right\vert ^{2}\text{ }dxdy\\
%& 
 =\int_{%
%TCIMACRO{\U{211d} }%
%BeginExpansion
\mathbb{R}
%EndExpansion
}\int_{%
%TCIMACRO{\U{211d} }%
%BeginExpansion
\mathbb{R}
%EndExpansion
}\left\vert \left(\left[M_{-ky} g\right]  \ast f^{\ast
}\right)\left(  x\right)  \right\vert ^{2}\text{ }dxdy.
\end{align*}
In the last equality above, $\ast$ stands for the usual convolution and
$f^{\ast}\left(  x\right)  =\overline{f\left(  -x\right)  }.$ 
For each $y \in \R$, the function $x \mapsto \left(\left[M_{-ky} g\right]  \ast f^{\ast
}\right)\left(  x\right) $ belongs to $C_0(\R)$, and is $L^2$ if and only if 
$$
\widehat {M_{-ky}g} \widehat{f^\ast} : \xi \mapsto \hat g(\xi + ky) \overline{\hat f}(\xi)
$$
belongs to $L^2(\R)$. 
We conclude that
\begin{align*}
\int_{0}^{1}\int_{%
%TCIMACRO{\U{211d} }%
%BeginExpansion
\mathbb{R}
%EndExpansion
}\int_{%
%TCIMACRO{\U{211d} }%
%BeginExpansion
\mathbb{R}
%EndExpansion
}\left\vert \left\langle g,\pi_{k}\left(  e^{2\pi i\theta},y,x\right)
f\right\rangle \right\vert ^{2}\text{ }dxdyd\theta &  =\int_{%
%TCIMACRO{\U{211d} }%
%BeginExpansion
\mathbb{R}
%EndExpansion
}\int_{%
%TCIMACRO{\U{211d} }%
%BeginExpansion
\mathbb{R}
%EndExpansion
}\left\vert\hat g(\xi + ky) \overline{\hat f}(\xi)
 \right\vert ^{2}\text{ }d\xi dy \\
 &= \int_\R \left( \int_\R \left\vert\hat g(\xi + ky)\right\vert^2dy\right) \vert\hat f(\xi)\vert^2\ d\xi
\\ &  =
k^{-1} \| f\|^2 \| g\|^2 < \infty. 
\end{align*}

\end{proof}

The following result now has a short proof.

\begin{lemma}
\label{forward}If Conjecture \ref{HRT} fails then Conjecture \ref{HT} fails as well.
\end{lemma}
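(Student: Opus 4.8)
The plan is to use the Stone--von Neumann machinery developed above to realize the regular representation of $\mathbb H$ on $L^2(\mathbb H)$ as a direct integral (really a direct sum over $k\in\Z\setminus\{0\}$, together with the part supported on $\widehat{\mathbb H/Z}$ which we can discard because translation by a central element is trivial there) of representations equivalent to the $\pi_k$. Concretely, I would observe that for $F\in L^2(\mathbb H)$ the left translation $L_h$ commutes appropriately, and that Lemma \ref{square_cont} already tells us the matrix coefficients $h\mapsto \langle g,\pi_k(h)f\rangle$ live in $L^2(\mathbb H)$; these coefficient maps are (up to the constant $k^{-1}$) isometric intertwiners from $L^2(\R)$ into $L^2(\mathbb H)$ that conjugate $\pi_k$ into left translation $L$. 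This is the standard fact that each $\pi_k$ occurs in the regular representation, with multiplicity equal to $\dim L^2(\R)$.

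Granting this, here are the steps in order. First, suppose Conjecture \ref{HRT} fails; by the equivalence already established, Conjecture \ref{HRT2} fails, so there is a nonzero integer $k$, a nonzero $\phi\in L^2(\R)$, a finite set $\mathcal F\subset\mathbb H$ with distinct cosets $hZ$, and scalars $c_h$, not all zero, with $\sum_{h\in\mathcal F}c_h\,\pi_k(h)\phi=0$ in $L^2(\R)$. Second, fix any nonzero $g\in L^2(\R)$ and define $F\in L^2(\mathbb H)$ by $F(h')=\langle g,\pi_k(h')\phi\rangle$; this is nonzero (e.g.\ because $\pi_k$ is irreducible, so the span of $\{\pi_k(h')\phi\}$ is dense, hence $F\not\equiv 0$) and square-integrable by Lemma \ref{square_cont}. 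Third, compute $L_hF(h')=F(h^{-1}h')=\langle g,\pi_k(h^{-1}h')\phi\rangle=\langle g,\pi_k(h^{-1})\pi_k(h)\phi\rangle$, wait---more carefully, $\pi_k(h^{-1}h')=\pi_k(h^{-1})\pi_k(h')$, so $L_hF(h')=\langle \pi_k(h^{-1})^*g,\pi_k(h')\phi\rangle=\langle \pi_k(h)g,\pi_k(h')\phi\rangle$; that is not quite what is wanted, so instead I would use the \emph{right} realization: note $F(h'h)=\langle g,\pi_k(h')\pi_k(h)\phi\rangle$, i.e.\ the map $\phi\mapsto F$ intertwines $\pi_k$ with \emph{right} translation. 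Since right and left regular representations of a unimodular group are unitarily equivalent (via $F(h')\mapsto F(h'^{-1})$, using unimodularity and $Z$ central), it suffices to produce a dependence among right translates, which is what we get: $\sum_h c_h\, R_hF = \sum_h c_h\langle g,\pi_k(\cdot)\pi_k(h)\phi\rangle = \langle g,\pi_k(\cdot)\big(\sum_h c_h\pi_k(h)\phi\big)\rangle = 0$. Fourth, transport this through the unimodular equivalence to get $\sum_h c_h'\,L_{h'}(F^\vee)=0$ with the same (reindexed, still distinct) cosets and not-all-zero coefficients, and note $F^\vee\ne 0$ since the flip is unitary; this contradicts Conjecture \ref{HT}, so that conjecture fails too.

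The main obstacle I expect is purely bookkeeping: getting the variance of the matrix-coefficient map correct (does $\phi\mapsto\langle g,\pi_k(\cdot)\phi\rangle$ intertwine $\pi_k$ with left or right translation?) and then handling the passage between left and right translates cleanly, including verifying that the coset-distinctness hypothesis is preserved under $h\mapsto h^{-1}$ (it is, since $h^{-1}Z = (hZ)^{-1}$ in $\mathbb H/Z$, and inversion is a bijection of $\mathbb H/Z$). One could sidestep the left/right issue entirely by instead defining $F(h') = \langle \pi_k(h')g,\phi\rangle = \overline{\langle \phi,\pi_k(h')g\rangle}$ or by choosing the coefficient against $\phi$ on the correct side; the cleanest route is to pick the version of the matrix coefficient for which $L_h$ acts by $\pi_k(h)$ on the $\phi$-slot directly, namely $F(h')=\langle g,\pi_k(h'^{-1})\phi\rangle$ or similar, and then Lemma \ref{square_cont} still applies after a change of variables (using unimodularity). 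A second minor point is confirming $F\ne 0$: this follows because if $\langle g,\pi_k(h')\phi\rangle=0$ for all $h'$ then, $\phi$ being a cyclic vector for the irreducible $\pi_k$, we would get $g=0$, contrary to choice. With those checks in place the argument is short, exactly as the paper promises.
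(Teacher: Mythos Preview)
Your proposal is correct and uses the same core idea as the paper: pass from a dependence $\sum_j c_j\pi_k(h_j)\phi=0$ to a dependence among translates of the matrix coefficient $F(h)=\langle\,\cdot\,,\pi_k(h)\,\cdot\,\rangle$, invoking Lemma~\ref{square_cont} for $F\in L^2(\mathbb H)$ and irreducibility for $F\ne 0$.

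The only difference is bookkeeping, and the paper's version is slicker. Instead of your detour through right translates and the flip $F\mapsto F^\vee$, the paper simply takes $g=\phi$, sets $F(h)=\langle \phi,\pi_k(h)\phi\rangle$, and inserts the dependent combination into the \emph{first} slot of the inner product:
\[
0=\Bigl\langle \sum_j c_j\pi_k(h_j)\phi,\ \pi_k(h)\phi\Bigr\rangle
=\sum_j c_j\langle \phi,\pi_k(h_j^{-1}h)\phi\rangle
=\sum_j c_j\,L_{h_j}F(h).
\]
Unitarity of $\pi_k(h_j)$ moves it across as $\pi_k(h_j^{-1})$, which combines with $\pi_k(h)$ to give $\pi_k(h_j^{-1}h)$ --- exactly left translation, no flip needed. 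This is precisely the ``choosing the coefficient against $\phi$ on the correct side'' option you mention at the end; the paper just commits to it from the start.
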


\begin{proof}
Suppose that Conjecture \ref{HRT} fails; then Conjecture \ref{HRT2} fails as well, so we have a nonzero function $\phi \in L^{2}\left(  \mathbb{R}%
\right)  $, elements $h_1, \dots , h_n\in \mathbb H$, and nonzero complex numbers $c_{1},\dots,c_{n}$,  such that the cosets $h_1Z, \dots , h_nZ$ are distinct, and %
\[
\sum_{\ell=1}^{n}c_{j}\pi_{k}(h_{j})  \phi =0.
\]
Put $F(h)  =\left\langle \phi,\pi_{k}\left(  h\right)
\phi \right\rangle .$ Since $\phi$ is a non-zero, according to Lemma
\ref{square_cont}, $F$ is a non-zero element of $ L^{2}\left(
\mathbb{H}\right)  .$ 
Since $\pi_k$ is unitary, we have
\[
0=\left\langle \sum_{\ell=1}^{n}c_{j}\pi\left( h_j\right)
\phi ,\pi_{k}\left(  h\right)  \phi\right\rangle =\sum_{j=1}^{n}c_{j
}\left\langle \phi,\pi_{k}\left(  h_j^{-1}h\right)  \phi \right\rangle  = \sum_j  c_j L_{h_j} F(h) . \]
Thus Conjecture \ref{HT} fails.
\end{proof}

\

It is worth noting that Lemma \ref{forward} was also proved in \cite[Proposition 1.1]{Linnell-2017}.

\

The proof of the converse of Lemma \ref{forward} requires a bit more work. 
Note that by Proposition \ref{second}, for the proof of the converse of 
Lemma \ref{forward}, it is enough to consider functions in the closed subspace 
$$
\mathcal K = \ker P = \{ F \in L^2(\mathbb H):PF = 0\}.
$$

\

Let $F \in C_c(\mathbb H)$; for each $k\in \mathbb Z \setminus \{0\}$, define a sesquilinear form $s_k$ on $L^2(\R) \times L^2(\R)$ by
$$
s_k : (f,g) \mapsto \int_\mathbb H \ F(h) \langle \pi_k(h)  f , g\rangle \ dh
$$
Since $F$ is integrable on $\mathbb H$ then $s_k$ is bounded, and hence defines a bounded linear operator $\pi_k(F)$ on $L^2(\R)$: 
$$
s_k(f,g) = \langle \pi_k(F) f, g\rangle
$$
Straightforward computations show that

\medskip
\noindent
(a) for each $h \in \mathbb H$, $\pi_k(L_hF) = \pi_k(h)\pi_k(F)$, and 

\medskip
\noindent
(b) $\pi_k(F)$ is an integral operator with kernel 
$$
K_{k}^{F}\left(  t,x\right)  =\mathcal{F}_{1}\mathcal{F}_{2}F\left(
k,-kt,t-x\right)  .
$$
where $\mathcal{F}_{1}\mathcal{F}_{2}F$ is the partial Fourier transform of $F(z,y,x)$ with respect to the variables $z\in \mathbb T$ and $y \in \R$. 
Since $K_{k}^{F}\left(  t,x\right) \in L^2(\R^2)$, then $\pi_k(F)$ is a Hilbert-Schmidt operator.

\medskip
\noindent
Observe that $L^2(\mathbb H) = \mathcal K \oplus L^2(\mathbb H / Z)$ and $\mathcal D = (I - P) C_c(\mathbb H)$ is dense in $\mathcal K$, since $I-P$ is a projection. 

\begin{proposition} \label{Plancherel extra-light} The map $F \mapsto (|k|^{1/2} \pi_k(F))_{k \in \mathbb Z \setminus \{0\}}$ extends to a linear isometry 
$$
\mathcal K \longrightarrow \bigoplus_{k \in \mathbb Z \setminus \{0\}} \mathbf{HS}(L^2(\R)). 
$$

\end{proposition}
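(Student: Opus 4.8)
The plan is to recognize the displayed map as (a restriction of) the Plancherel transform for the reduced Heisenberg group $\mathbb H$, and to reduce the isometry statement to the classical Plancherel theorem for $\mathbb R^2$ applied fiber-by-fiber in the central variable. The starting point is the formula for the kernel in (b): for $F \in C_c(\mathbb H)$ one has $\pi_k(F)$ is Hilbert--Schmidt with kernel $K_k^F(t,x) = \mathcal F_1\mathcal F_2 F(k,-kt,t-x)$. First I would compute $\|\pi_k(F)\|_{\mathbf{HS}}^2 = \iint_{\R^2} |K_k^F(t,x)|^2\,dt\,dx$; after the (measure-preserving, since we integrate over all of $\R^2$) change of variables $(t,x)\mapsto(t, t-x)$ this becomes $\iint_{\R^2} |\mathcal F_1\mathcal F_2 F(k,-kt, x)|^2\,dt\,dx$, and then the substitution $t \mapsto -t/k$ produces a factor $|k|^{-1}$, giving
\[
\||k|^{1/2}\pi_k(F)\|_{\mathbf{HS}}^2 = \iint_{\R^2} \bigl|\mathcal F_1\mathcal F_2 F(k, t, x)\bigr|^2\,dt\,dx.
\]

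Next I would observe that $\mathcal F_1\mathcal F_2 F(k,t,x)$ is exactly the value at the frequency $(k,t)$ of the partial Fourier transform of $x \mapsto F(z,y,x)$ in the first two variables $(z,y) \in \mathbb T \times \R$, evaluated at the remaining variable $x$. Summing over $k \in \Z$ and integrating over $(t,x) \in \R^2$, the Plancherel theorem on $\mathbb T \times \R$ (applied for each fixed $x$) yields
\[
\sum_{k \in \Z} \iint_{\R^2} \bigl|\mathcal F_1\mathcal F_2 F(k,t,x)\bigr|^2\,dt\,dx = \int_{\mathbb T}\int_\R\int_\R |F(z,y,x)|^2\,dz\,dy\,dx = \|F\|_{L^2(\mathbb H)}^2.
\]
The contribution of the $k=0$ term on the left is $\iint |\mathcal F_2(PF)(t,x)|^2 = \|PF\|_{L^2(\mathbb H/Z)}^2$ (using $\mathcal F_1 F(0,y,x) = \int_{\mathbb T}F = PF$), so for $F \in \mathcal D = (I-P)C_c(\mathbb H)$, where $PF = 0$, the $k=0$ term drops out and we get
\[
\sum_{k \in \Z\setminus\{0\}} \||k|^{1/2}\pi_k(F)\|_{\mathbf{HS}}^2 = \|F\|_{L^2(\mathbb H)}^2.
\]
Thus $F \mapsto (|k|^{1/2}\pi_k(F))_k$ is an isometry from the dense subspace $\mathcal D \subset \mathcal K$ into $\bigoplus_{k\neq 0}\mathbf{HS}(L^2(\R))$, and therefore extends uniquely to a linear isometry on all of $\mathcal K$.

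The main obstacle, and the step deserving care, is the bookkeeping in the change of variables that produces the weight $|k|^{1/2}$: one must be sure the partial Fourier transform conventions in (b) match those used in the Plancherel computation, that the map $(t,x)\mapsto(t,t-x)$ is measure-preserving on $\R^2$ (it is, being a shear), and that the Jacobian of $t\mapsto -t/k$ is exactly $|k|^{-1}$ so that the prefactor $|k|^{1/2}$ is the correct normalization. A secondary point is to justify interchanging the sum over $k$ with the integral over $(t,x)$, which is immediate by Tonelli since all integrands are nonnegative. Everything else — boundedness of $s_k$, the Hilbert--Schmidt property, density of $\mathcal D$ in $\mathcal K$ — is already supplied in the text preceding the statement, so the proof is essentially a direct computation followed by an appeal to the $\R^2$ (equivalently $\mathbb T \times \R$) Plancherel theorem and the standard extension-by-density argument.
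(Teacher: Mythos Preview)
Your proposal is correct and follows essentially the same route as the paper: compute $\|\pi_k(F)\|_{\mathbf{HS}}^2$ from the kernel formula in (b), perform the shear $(t,x)\mapsto(t,t-x)$ and the dilation $t\mapsto -t/k$ to produce the factor $|k|^{-1}$, and then invoke Parseval/Plancherel in the $(z,y)$ variables together with $PF=0$ to drop the $k=0$ term and conclude on the dense subspace $\mathcal D$. The only cosmetic difference is ordering---the paper first expands $\|F\|^2$ via Parseval on $\mathbb T$ and then matches each summand with $|k|\,\|\pi_k(F)\|_{\mathbf{HS}}^2$, whereas you compute the Hilbert--Schmidt norms first and sum afterward---but the content is identical.
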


\begin{proof} Let $F \in \mathcal D$; then $0 = PF = \mathcal F_1F(0,\cdot,\cdot)$ so
$$
\| F\|^2_{L^2(\mathbb H)} = \sum_{k\in \mathbb Z \setminus \{0\}} \int_\R\int_\R |\mathcal F_1F(k,y,x)|^2 dydx.
$$
We claim that for each $k$, $ \| \pi_k(F)\|^2_{\mathbf{HS}}  =|k|^{-1}\int_\R\int_\R |\mathcal F_1F(k,y,x)|^2 dydx.$ Recall that $\pi_k(F)$ is given by
$$
\left(\pi_k(F)\phi\right)(t) = \int_\R \ K_{k}^{F}\left(  t,x\right) \phi (x) dx, \ \ \phi \in L^2(\R)
$$
where $K_{k}^{F}\left(  t,x\right)$ is defined as above, so
$$
 \| \pi_k(F)\|^2_{\mathbf{HS}} = \int_\R\int_\R \ |K_{k}^{F}\left(  t,x\right) |^2 \ dtdx =\int_{\mathbb{R}}\int_{\mathbb{R}}\left\vert \mathcal{F}_{1}\mathcal{F}_{2}F\left(  k,-kt,t-x\right)
\right\vert ^{2}dtdx.
$$
Changing variables gives
$$
 \| \pi_k(F)\|^2_{\mathbf{HS}} =\frac{1}{\left\vert k\right\vert }\int_{\mathbb{R}}\int_{
\mathbb{R}}\left\vert \mathcal{F}_{1}\mathcal{F}_{2}F\left(  k,t,x\right)  \right\vert^2 dtdx = \frac{1}{\left\vert k\right\vert }\int_{\mathbb{R}}\int_{
\mathbb{R}}\left\vert \mathcal{F}_{1}F\left(  k,t,x\right)  \right\vert 
^{2}dtdx
$$
as claimed. Thus for all $F \in \mathcal D$, 
$$
\| F\|^2_{L^2(\mathbb H)} = \sum_{k\in \mathbb Z \setminus \{0\}} \int_\R\int_\R |\mathcal F_1F(k,y,x)|^2 dydx =\sum_{k\in \mathbb Z \setminus \{0\}} |k| \  \| \pi_k(F)\|^2_{\mathbf{HS}}. 
$$
\end{proof}

\begin{lemma}
\label{Converse} If Conjecture \ref{HT} fails then Conjecture \ref{HRT} fails
as well.
\end{lemma}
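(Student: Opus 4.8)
The plan is to contrapose: assuming Conjecture \ref{HT} fails, I want to produce a counterexample to Conjecture \ref{HRT}. By Proposition \ref{second}, any dependent family of translates $\{L_{h_j}F\}$ with distinct cosets $h_jZ$ must involve an $F$ with a nontrivial component in $\mathcal K = \ker P$, since on $L^2(\mathbb H/Z)$ translates with distinct cosets are always independent. Since $\mathcal D = (I-P)C_c(\mathbb H)$ is dense in $\mathcal K$ and the relation $\sum_j c_j L_{h_j}F = 0$ is closed, I would first reduce to the case $F \in C_c(\mathbb H)$ with $PF = 0$ (applying $I-P$ to a near-counterexample and using density/continuity of the $L_{h_j}$ to keep dependence; one must be slightly careful that the perturbed function is still nonzero, which holds generically). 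So assume $F \in C_c(\mathbb H)$, $PF=0$, $F \neq 0$, and $\sum_{j=1}^n c_j L_{h_j}F = 0$ with all $c_j \neq 0$ and the cosets $h_jZ$ distinct.

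Next I would push this relation through the operators $\pi_k(F)$. Using fact (a), $\pi_k(L_{h_j}F) = \pi_k(h_j)\pi_k(F)$, so applying $\pi_k$ to the dependence relation gives, for every $k \in \mathbb Z\setminus\{0\}$,
$$
\sum_{j=1}^n c_j\, \pi_k(h_j)\,\pi_k(F) = 0
$$
as operators on $L^2(\R)$. By Proposition \ref{Plancherel extra-light}, since $F \neq 0$ there is at least one $k$ with $\pi_k(F) \neq 0$; fix such a $k$. Then $\pi_k(F)$, being a nonzero Hilbert–Schmidt operator, has nonzero range, so pick $\phi \in L^2(\R)$ with $\psi := \pi_k(F)\phi \neq 0$. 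Applying the operator identity to $\phi$ yields
$$
\sum_{j=1}^n c_j\, \pi_k(h_j)\,\psi = 0,
$$
a nontrivial linear dependence among $\{\pi_k(h_j)\psi : j = 1,\dots,n\}$.

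The last point to verify is that this genuinely contradicts Conjecture \ref{HRT2} (hence \ref{HRT}): I need the cosets $h_jZ$ to remain distinct — which they are by hypothesis — and the coefficients $c_j$ to be nonzero, which they are. Since $\psi \neq 0$ and $k \neq 0$, this is exactly a failure of Conjecture \ref{HRT2}, which by the equivalence established earlier in the paper is a failure of Conjecture \ref{HRT}. Combined with Lemma \ref{forward}, this proves Theorem \ref{main}.

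The main obstacle I anticipate is the reduction step to $F \in \ker P$: starting from an arbitrary $L^2$ counterexample, I must argue that replacing $F$ by $(I-P)F$ preserves the dependence relation (immediate, since $I-P$ commutes with each $L_{h_j}$ and $P L_{h_j} F = L_{h_j} PF$ because $P$ averages over the central subgroup $Z$ and $h_jZ$-translation is compatible with this averaging — this needs a one-line check), and that $(I-P)F \neq 0$. The latter is where Proposition \ref{second} is essential: if $(I-P)F = 0$ then $F = PF \in L^2(\mathbb H/Z)$, and then the translates would be independent, contradiction. After that, the core argument — turning the translate relation into an HRT-type relation via the Hilbert–Schmidt operators $\pi_k(F)$ — is straightforward given the machinery already assembled.
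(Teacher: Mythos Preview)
Your core approach matches the paper's exactly: reduce to $F \in \mathcal{K}$ via Proposition~\ref{second}, invoke Proposition~\ref{Plancherel extra-light} to find $k$ with $\pi_k(F) \neq 0$, pick $\psi = \pi_k(F)\phi \neq 0$, and use the intertwining relation (a) to turn the translate dependence into a dependence among the $\pi_k(h_j)\psi$, contradicting Conjecture~\ref{HRT2}. Your handling of the reduction to $\mathcal{K}$ (showing $I-P$ commutes with $L_{h_j}$ and that $(I-P)F\neq 0$ by Proposition~\ref{second}) is also correct and is exactly what the paper's one-line ``we may assume $F\in\mathcal{K}$'' is abbreviating.

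The one step that does not work as written is your further reduction to $F \in C_c(\mathbb{H})$. You cannot preserve an \emph{exact} linear relation $\sum_j c_j L_{h_j}F = 0$ by approximating $F$ with elements of $\mathcal{D}$: density only yields $\sum_j c_j L_{h_j}F_n \to 0$, not equality for any particular $F_n$, and the kernel of $\sum_j c_j L_{h_j}$ need not meet $\mathcal{D}$ nontrivially. Fortunately this reduction is unnecessary, and the paper does not make it. Proposition~\ref{Plancherel extra-light} already extends $F \mapsto (\pi_k(F))_k$ isometrically to all of $\mathcal{K}$, and the relation $\pi_k(L_h F) = \pi_k(h)\pi_k(F)$ then holds on $\mathcal{K}$ by continuity (both sides are continuous in $F$ in the Hilbert--Schmidt norm). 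With that observation your argument goes through verbatim for $F \in \mathcal{K}$, and coincides with the paper's proof.
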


\begin{proof} Suppose that Conjecture \ref{HT} fails: there exists a non-zero function $F$ in $L^{2}\left(
\mathbb{H}\right)  $, elements $h_1, \dots , h_n \in \mathbb H$, and non-zero complex numbers $c_{1},\cdots,c_{n}$, such that the cosets $h_1Z, \dots , h_nZ$ are distinct, and 
\[
\sum_{j=1}^{n}c_j L_{h_j} F =0.\]
Recall that we may assume that $F \in \mathcal K$. 

By Lemma
\ref{Plancherel extra-light}, we have $k\in\mathbb Z \setminus \{0\}$ such that
$$
\|  \pi_k(F) \|^2_{\mathbf{HS}}\neq0
$$
so choose $\phi \in L^2(\R)$ such that $\psi =  \pi_k(F) \phi \ne 0$. 
But
$$
\sum_{j=1}^n c_j \pi_k(h_j) \psi = \sum_{j=1}^n c_j\pi_k(L_{h_j}F) \phi = \pi_k\left(\sum_{j=1}^n c_j L_{h_j }F\right)\phi  = 0, 
$$
showing that Conjecture \ref{HRT2} fails, and hence Conjecture \ref{HRT}, fails. 
\end{proof}

\begin{remark}
The proof of Theorem \ref{main} is a direct application of Lemma \ref{forward}
and its converse: Lemma \ref{Converse}.
\end{remark}

\section{Additional observations on Conjecture \ref{HT}}
%\textcolor{red}{I suggest that we conclude the paper with the following low-hanging fruits observations which also show an application of our approach.}

Let $\mathcal{B}(L^2(\mathbb{H}))$ be the algebra of bounded linear operators acting on $L^2(\mathbb{H}).$ Next, let $\mathcal{C}(L)$ be the linear space of all bounded operators on $L^2(\mathbb{H})$ commuting with $L_h,h\in \mathbb{H}.$ It is closed under weak limits and taking adjoints, and as such it is a von Neumann algebra.

Define the right regular representation $R$ of $\mathbb{H}$ as follows. For $h\in\mathbb{H},$ we define, a unitary operator acting by right translation
on $L^{2}\left(  \mathbb{H}\right)  $ as $R_{h}F\left(  x\right)
=F\left(  xh\right).$ According to a well-known result of Takesaki, $\mathcal{C}(L)$ is the von Neummann algebra generated by the right regular representation \cite{takesaki1969generalized}.

\

\begin{proposition} \label{cyclic}
	The right regular representation of $\mathbb{H}$ admits a cyclic vector. In other words, there exists a vector $F\in L^2(\mathbb{H})$ such that the linear span of $R_h F,h\in\mathbb{H}$ is a dense subspace of $L^2(\mathbb{H}).$
\end{proposition}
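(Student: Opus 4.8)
The plan is to exhibit an explicit cyclic vector using the decomposition of $L^2(\mathbb H)$ furnished by the Plancherel theorem for $\mathbb H$ (equivalently, by the Stone--von Neumann analysis and Proposition~\ref{Plancherel extra-light}). Write $L^2(\mathbb H) = L^2(\mathbb H/Z) \oplus \mathcal K$, where on $\mathcal K$ the map $F \mapsto (|k|^{1/2}\pi_k(F))_{k \neq 0}$ is an isometry onto $\bigoplus_{k \neq 0} \mathbf{HS}(L^2(\R))$, and on $L^2(\mathbb H/Z) \cong L^2(\R^2)$ the right regular representation is carried (via the Euclidean Fourier transform) to multiplication operators. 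Under the Plancherel picture the operator $R_h$ acts on the $\pi_k$-component of $\mathbf{HS}(L^2(\R))$ by precomposition with $\pi_k(h)^{-1}$ (i.e. $R_h$ corresponds to $S \mapsto S\,\pi_k(h)^{-1}$ on Hilbert--Schmidt operators), and on the central part by the characters $\chi_{0,\omega}$.

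The key steps, in order, are: (1) recall/record that $R_h$ on the $k$-th Hilbert--Schmidt summand is right multiplication by $\pi_k(h^{-1})$; (2) observe that a single Hilbert--Schmidt operator $S_k$ is a cyclic vector for this right-multiplication action precisely when $S_k$ is injective with dense range — equivalently $S_k = S_k^*$ a positive Hilbert--Schmidt operator with trivial kernel — because the right ideal generated by such an $S_k$ is dense in $\mathbf{HS}$ (the von Neumann algebra generated by $\{\pi_k(h) : h\}$ is all of $\mathcal B(L^2(\R))$ by irreducibility and Schur, and a dense right ideal times an injective dense-range element stays dense); (3) on the central part $L^2(\R^2)$, choose a function $g$ whose Euclidean Fourier transform $\hat g$ vanishes nowhere (e.g. a Gaussian), so that $g$ is cyclic for translations; (4) assemble $F$ by putting $F = g$ on $L^2(\mathbb H/Z)$ and, on $\mathcal K$, choosing for each $k$ a positive injective Hilbert--Schmidt operator $S_k$ with $\|S_k\|_{\mathbf{HS}}$ small enough (say $\le 2^{-|k|}|k|^{-1/2}$) that $\sum_{k\neq 0}|k|\,\|S_k\|_{\mathbf{HS}}^2 < \infty$, so the assembled vector lies in $L^2(\mathbb H)$; (5) verify cyclicity summand-by-summand: since each summand is separately invariant and the right-translation action is block-diagonal across the $\chi_{0,\omega}$ and $\pi_k$, density of $\mathrm{span}\{R_hF\}$ reduces to cyclicity in each piece, which (2) and (3) supply — here one uses that the $\chi_{0,\omega}$ for varying $\omega$ fill out a continuum and the $\pi_k$ are pairwise inequivalent, so a vector nonzero in every Plancherel fiber is cyclic for the regular representation.

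The main obstacle I expect is step (2) together with the bookkeeping in step (5): one must be careful that a vector which is ``cyclic in each fiber'' is genuinely cyclic for the direct-integral/direct-sum representation. For the discrete part this is clean because there are only countably many inequivalent $\pi_k$ and the representation restricted to $\mathcal K$ is a genuine direct sum; cyclicity of $F|_{\mathcal K}$ follows from cyclicity in each $\mathbf{HS}(L^2(\R))$ provided we also know that finite linear combinations separate the summands, which holds because the $\pi_k$ are mutually inequivalent (distinct central characters $z \mapsto z^k$) so one can isolate each block by integrating against a suitable character of $Z$. For the continuous part $L^2(\R^2)$, cyclicity of a nowhere-vanishing-Fourier-transform function under translations is the classical Wiener Tauberian fact. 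The only real verification is (2): that a single positive injective Hilbert--Schmidt operator generates a dense right ideal in $\mathbf{HS}(L^2(\R))$ under the action of the full operator algebra — this follows since $\{T S_k : T \in \mathcal B(L^2(\R))\}$ already contains $P S_k$ for every finite-rank projection $P$, and as $P \nearrow I$ strongly with $S_k$ Hilbert--Schmidt we get $P S_k \to S_k$ in $\mathbf{HS}$-norm, and then right-multiplying by $\pi_k(h)$'s and using irreducibility recovers a dense subspace. One then checks the assembled $F$ has finite $L^2(\mathbb H)$-norm by the Plancherel identity of Proposition~\ref{Plancherel extra-light}, completing the proof.
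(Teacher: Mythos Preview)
The paper does not prove this proposition; it simply cites Losert--Rindler (a constructive argument valid for any first-countable locally compact group) and Greenleaf--Moskowitz (a non-constructive existence proof). Your approach is genuinely different and more in the spirit of the paper itself: you build an explicit cyclic vector from the Plancherel decomposition of $L^2(\mathbb H)$, using the machinery of Proposition~\ref{Plancherel extra-light}. What this buys is a self-contained, Heisenberg-specific argument that exercises the paper's own tools; what the cited references buy is generality (they apply to any first-countable locally compact group, with no representation theory needed).

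Two points need tightening. First, Proposition~\ref{Plancherel extra-light} is stated only as an isometry, not a surjection; you need surjectivity to know that your prescribed sequence $(S_k)_{k\neq 0}$ actually arises from some $F\in\mathcal K$. This is true and follows from the explicit kernel formula $K_k^F(t,x)=\mathcal F_1\mathcal F_2 F(k,-kt,t-x)$, which is visibly invertible as a map $L^2(\R^2)\to L^2(\R^2)$, but you should say so. Second, your verification of step~(2) confuses left and right: the $R_h$-action is $S\mapsto S\,\pi_k(h)^{-1}$, so the relevant set is $\{S_kT:T\in\mathrm{span}\,\pi_k(\mathbb H)\}$, not $\{TS_k\}$, and the finite-rank projection argument you sketch does not apply. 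The clean argument is by duality: if $T\in\mathbf{HS}$ is orthogonal to every $S_k\pi_k(h)^{-1}$, then $\mathrm{tr}\bigl(T^*S_k\,\pi_k(h)^{-1}\bigr)=0$ for all $h$; since $T^*S_k$ is trace class and the ultraweak closure of $\mathrm{span}\,\pi_k(\mathbb H)$ is all of $\mathcal B(L^2(\R))$ by irreducibility, this forces $T^*S_k=0$, hence $T=0$ whenever $S_k$ has dense range. With these two fixes your argument goes through; the separation of the summands via the central characters $z\mapsto z^k$ in step~(5) is correct as you describe it.
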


\

For a proof Proposition \ref{cyclic}, we refer the interested reader to a paper of Losert and Rindler \cite{losert1980cyclic} which gives a construction of a cyclic vector for the regular representation of any first countable locally compact group. A non-constructive proof of Proposition \ref{cyclic} can also be found in \cite{greenleaf1971cyclic}.

\

\begin{proposition}
	If $F$ is a cyclic vector for the right regular representation of the Heisenberg group then Conjecture \ref{HT} holds for $F.$
\end{proposition}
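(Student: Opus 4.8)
The plan is to combine the cyclicity of $F$ with the relation between left and right translations on a unimodular group via the inversion map. Suppose $F$ is cyclic for the right regular representation $R$, and suppose toward a contradiction that there are distinct cosets $h_1 Z, \dots, h_n Z$ and nonzero scalars $c_1, \dots, c_n$ with $\sum_{j=1}^n c_j L_{h_j} F = 0$. First I would pass to the inversion operator $J F(h) = F(h^{-1})$, which is a unitary on $L^2(\mathbb H)$ because $\mathbb H$ is unimodular, and which intertwines left and right translations: $J L_h = R_h J$ (a short computation using $(h^{-1} x)^{-1} = x^{-1} h$). Applying $J$ to the dependence relation yields $\sum_{j=1}^n c_j R_{h_j}(JF) = 0$, so it suffices to show that a cyclic vector $G := JF$ for $R$ cannot satisfy any nontrivial relation $\sum_j c_j R_{h_j} G = 0$ with distinct cosets $h_j Z$.

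Next I would exploit cyclicity. If $\sum_j c_j R_{h_j} G = 0$, then for every $k \in \mathbb H$ we also have $\sum_j c_j R_{h_j}(R_k G) = R_k\big(\sum_j c_j R_{h_j} G\big) = 0$, since right translations commute with each other. By linearity the same relation $\sum_j c_j R_{h_j} \Phi = 0$ then holds for every $\Phi$ in the linear span of $\{R_k G : k \in \mathbb H\}$, and by continuity of each $R_{h_j}$ it holds for every $\Phi$ in the closure of that span, which is all of $L^2(\mathbb H)$ by cyclicity. In other words, the bounded operator $\sum_j c_j R_{h_j}$ is the zero operator on $L^2(\mathbb H)$. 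The final step is to show this forces all $c_j = 0$, contradicting the assumption that they are nonzero (indeed that they give a genuine dependence): the right regular representation, restricted to operators $R_h$ with $h$ ranging over representatives of distinct cosets modulo $Z$, is "linearly independent" as a family of operators. One clean way to see this is to test against functions supported near the identity: choosing $\Phi$ supported in a small neighborhood $U$ of $e$ with $U h_j^{-1}$ pairwise disjoint (possible because the $h_j$ — even modulo $Z$, hence modulo nothing since we can pick coset representatives — are distinct points of $\mathbb H$, and $\mathbb H$ is Hausdorff), the functions $R_{h_j}\Phi$ have pairwise disjoint supports $U h_j^{-1}$, so $\sum_j c_j R_{h_j}\Phi = 0$ in $L^2$ forces each $c_j \Phi = 0$, whence $c_j = 0$.

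I expect the only real subtlety to be the bookkeeping around the center $Z$: the hypothesis of Conjecture \ref{HT} only asserts that the cosets $h_j Z$ are distinct, not that the points $h_j$ are distinct, but this is actually harmless here because we may replace each $h_j$ by any coset representative without changing $R_{h_j}$ up to a scalar of modulus one — and in fact, since $Z$ is a subgroup and distinct cosets have distinct (in particular, for a suitable choice, well-separated) representatives, the disjoint-support argument goes through verbatim once we fix representatives. Strictly speaking the coset hypothesis is not even needed for the argument that $\sum_j c_j R_{h_j}$ is the zero operator; it is only needed to guarantee that a nontrivial dependence relation among the $L_{h_j} F$ corresponds to nontrivial $c_j$, which is exactly the content of the Rosenblatt remark preceding Conjecture \ref{HT}. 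So the proof reduces to: (i) $J$ intertwines $L$ and $R$; (ii) cyclicity upgrades a vector relation to an operator identity; (iii) the operators $R_{h_j}$ are linearly independent, by a disjoint-support test function. None of these steps presents a genuine obstacle; the main thing to be careful about is invoking unimodularity of $\mathbb H$ (already noted in the excerpt) to ensure $J$ is unitary.
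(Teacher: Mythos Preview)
Your detour through the inversion map $J$ introduces two related errors, and neither is cosmetic.

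First, you assert that $G:=JF$ is cyclic for the \emph{right} regular representation. What the intertwining relation $JR_h=L_hJ$ actually gives is that $JF$ is cyclic for the \emph{left} regular representation: $\operatorname{span}\{L_h(JF)\}=J\bigl(\operatorname{span}\{R_hF\}\bigr)$ is dense because $J$ is unitary and $F$ is $R$-cyclic. You have not justified $R$-cyclicity of $JF$, and there is no reason it should hold in general.

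Second, and more seriously, your claim that ``right translations commute with each other'' is false on any nonabelian group: $R_hR_k=R_{hk}\ne R_{kh}=R_kR_h$ on $\mathbb H$. So the step $\sum_j c_j R_{h_j}(R_kG)=R_k\bigl(\sum_j c_j R_{h_j}G\bigr)$ does not hold, and your upgrade from a vector relation to an operator identity collapses.

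Both problems disappear if you drop the inversion map entirely, which is exactly what the paper does. The correct commutation is between \emph{left} and \emph{right} translations: $L_hR_k=R_kL_h$ for all $h,k\in\mathbb H$. Hence if $\sum_j c_jL_{h_j}F=0$, then for every $k$ one has $\sum_j c_jL_{h_j}(R_kF)=R_k\bigl(\sum_j c_jL_{h_j}F\bigr)=0$; since $F$ is $R$-cyclic, the bounded operator $\sum_j c_jL_{h_j}$ vanishes on a dense set and is therefore zero. Your step (iii), exhibiting a compactly supported test function with disjoint translates to show this operator is nonzero, is correct and in fact more explicit than the paper's appeal to later results. So the fix is simply: delete $J$, keep the dependence relation on the $L$-side, and use $L\leftrightarrow R$ commutation together with $R$-cyclicity of $F$ itself.
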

\begin{proof}

	Suppose by ways of contradiction that $\sum_{j=1}^{n}c_{j}L_{h_{j}}F=0$ for some nonzero scalars
	$c_{1},\cdots,c_{n}$ and distinct cosets $h_1Z, \dots , h_nZ.$ Then the linear span of the vectors $R_hF,h\in\mathbb{H}$ is a dense subset of $L^2(\mathbb{H})$ contained the kernel of the bounded operator  $J=\sum_{j=1}^{n}c_{j}L_{h_{j}}.$ The continuity of $J$ implies that $J$ is the zero operator in $\mathcal{B}(L^2(\mathbb{H})).$ This gives a contradiction since it is easy to construct a function $F_1\in L^2(\mathbb{H})$ such that $JF_1\neq 0$  (see Proposition \ref{half_line} and Corollary \ref{compact} for example.)
\end{proof}

\begin{proposition}\label{half_line}
	Conjecture \ref{HT} holds for non-trivial functions which are Schwartz in
	the $\left(y,x\right)$-variable and supported on a half-line in the
	$x$-variable. 
\end{proposition}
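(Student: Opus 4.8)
The plan is to exploit the half-line support in the $x$-variable to reduce a dependence relation among Heisenberg translates to a dependence relation among pure time-frequency translates of a Schwartz function, where we can invoke known positive cases of HRT, or else derive a contradiction directly. First I would normalize: since $C_c(\mathbb H) = C_c(\mathbb H/Z)\oplus \mathcal D$ and Proposition \ref{second} already handles the $L^2(\mathbb H/Z)$ component, it suffices to treat $F\in\mathcal K=\ker P$; moreover by the translation computation in the proof of Proposition \ref{second}, a relation $\sum_j c_j L_{h_j}F=0$ only depends on the cosets $h_jZ$, so I may replace each $h_j$ by a representative $(1,y_j,x_j)$ and assume the pairs $(x_j,y_j)$ are distinct points of $\R^2$. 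Write $h_j = (1,y_j,x_j)$ and suppose, after relabeling, that $x_1 = \min_j x_j$, say $x_1\le x_2\le\cdots$, with the convention that ties in $x$ are broken arbitrarily.

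Next I would pass to the Plancherel/Fourier side as in Proposition \ref{Plancherel extra-light}: the relation $\sum_j c_j L_{h_j}F=0$ forces, for every nonzero integer $k$, the operator identity $\sum_j c_j\,\pi_k(h_j)\,\pi_k(F)=0$ on $L^2(\R)$. Fix $k$ with $\pi_k(F)\ne 0$; then there is $\phi\in L^2(\R)$ with $\psi = \pi_k(F)\phi\ne 0$, and $\sum_j c_j M_{ky_j}T_{x_j}\psi = 0$ in $L^2(\R)$. The key structural point is that the support condition on $F$ transfers to $\psi$: by part (b) of the displayed facts, $\pi_k(F)$ has kernel $K_k^F(t,x) = \mathcal F_1\mathcal F_2 F(k,-kt,t-x)$, and if $F(z,y,x)$ is supported in $\{x\ge a\}$ then $\mathcal F_1\mathcal F_2 F(k,-kt,t-x)$ is supported in $\{t-x\ge a\}$, i.e.\ $K_k^F(t,x)=0$ unless $x\le t-a$; hence $\psi(t) = \int K_k^F(t,x)\phi(x)\,dx$ shows $\psi$ is again supported on a half-line in $t$ — indeed $\operatorname{ess\,inf}\operatorname{supp}\psi \ge a + \operatorname{ess\,inf}\operatorname{supp}\phi$ — and moreover the Schwartz hypothesis on $F$ in $(y,x)$ makes $K_k^F$ Schwartz in $(t,x)$, so $\psi$ is Schwartz (choosing, if needed, $\phi$ of compact support to clean up integrability). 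So the dependence relation $\sum_j c_j M_{ky_j}T_{x_j}\psi=0$ holds for a nonzero $\psi$ that is Schwartz and supported on a half-line.

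Now I would finish with an elementary ``leftmost translate'' argument on the half-line. Set $b = \operatorname{ess\,inf}\operatorname{supp}\psi$, which is finite since $\psi\ne0$ and $\psi$ is supported on a half-line; then $T_{x_j}\psi$ is supported in $[b+x_j,\infty)$, and for $t$ slightly larger than $b+x_1 = \min_j(b+x_j)$ only the terms with $x_j = x_1$ contribute. If $x_1$ is attained by a single index, we immediately get $c_1 M_{ky_1}T_{x_1}\psi = 0$ near $b+x_1$, contradicting $c_1\ne 0$ and the definition of $b$. If several indices tie at the minimal value $x_1$, then near $t = b+x_1$ the relation reads $\big(\sum_{j:\,x_j=x_1} c_j e^{2\pi i k y_j t}\big)T_{x_1}\psi(t) = 0$; since the $y_j$ are distinct among these indices, the trigonometric polynomial $\sum c_j e^{2\pi i k y_j t}$ has only isolated zeros, so $T_{x_1}\psi$ vanishes on a neighborhood of $b+x_1$ minus a discrete set, again contradicting the choice of $b$. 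Either way we reach a contradiction, proving the Proposition.

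\textbf{Main obstacle.} The routine part is the Fourier-side bookkeeping; the step that needs care is justifying that $\psi = \pi_k(F)\phi$ is genuinely Schwartz (not merely rapidly decaying) and that its essential infimum of support is finite and behaves additively — one must check that the half-line support of $F$ in $x$ is not destroyed by the partial Fourier transforms $\mathcal F_1\mathcal F_2$ (it is not, since those act in $z$ and $y$, leaving the $x$-variable alone up to the affine substitution $(t,x)\mapsto(-kt,t-x)$) and then track how convolving against $\phi$ shifts the support; choosing $\phi$ compactly supported makes this transparent and also secures the Schwartz property of $\psi$ from that of $K_k^F$.
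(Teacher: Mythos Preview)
Your argument is correct and follows essentially the same route as the paper: reduce to $F\in\mathcal K$, pick $k$ with $\pi_k(F)\ne 0$ and $\phi\in C_c(\R)$ with $\psi=\pi_k(F)\phi\ne 0$, observe that $\psi$ inherits half-line support from $F$ via the kernel formula, and conclude that the relation $\sum_j c_j\,\pi_k(h_j)\psi=0$ is impossible. The only difference is cosmetic: where the paper finishes by citing \cite[Proposition~3]{heil1996linear} (time-frequency shifts of a half-line supported function are independent), you reprove that fact inline with the ``leftmost translate'' argument, which is exactly how that proposition is proved. One remark: your concern in the ``Main obstacle'' paragraph about $\psi$ being genuinely Schwartz is unnecessary --- your endgame only uses that $\psi$ is nonzero with $b=\operatorname{ess\,inf}\operatorname{supp}\psi>-\infty$, and that the exponential sum $\sum_{j:\,x_j=x_1} c_j e^{2\pi i k y_j t}$ (distinct $y_j$, nonzero $c_j$) is a nonzero real-analytic function with isolated zeros; Schwartz regularity plays no role.
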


\begin{proof}
	Let $F$ be a non-zero function on the Heisenberg group, Schwartz
	in the $\left(y,x\right)  $-variable and supported on a half-line in the
	$x$-variable. Suppose that $\sum_{j=1}^{n}c_{j}L_{h_{j}}F=0$ for some nonzero
	scalars $c_{1},\cdots,c_{n}$ and distinct cosets $h_1Z, \dots , h_nZ.$ Without loss of generality, we may assume that
	$F\in\mathcal{K}$. Since the set of compactly supported and continuous
	functions is dense in $L^{2}\left(\mathbb{R}	\right),$ there exist $\phi\in C_{c}\left(  \mathbb{R}%
	\right)$ and a nonzero integer $k$ such that $\pi_{k}\left(  F\right)  \phi$ is nonzero in
	$L^{2}\left(\R	\right)  $. By assumption,
	\[
	0=\pi_{k}\left(  \sum_{j=1}^{n}c_{j}L_{h_{j}}F\right)  \phi=\sum_{j=1}%
	^{n}c_{j}\pi_{k}\left(  h_{j}\right)  \pi_{k}\left(  F\right)  \phi.
	\]
	On the other hand, it is not hard to verify that $\pi_{k}\left(  F\right)  \phi$ is necessarily supported on a half-line in $L^{2}\left(\R\right).$ However, it is known that the time-frequency shifts of such a function must be linearly independent \cite[Proposition 3]{heil1996linear}. This gives a contradiction. 
\end{proof}

\

A straightforward application of Proposition \ref{half_line} gives the following. 
\begin{corollary}\label{compact}
		Conjecture \ref{HT} holds for all non-trivial functions which are in $C_c^{\infty}(\mathbb{H}).$
\end{corollary}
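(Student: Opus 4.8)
The plan is to deduce Corollary \ref{compact} directly from Proposition \ref{half_line} by reducing a general $C_c^\infty$ function on $\mathbb H$ to one whose $x$-support is contained in a half-line, without destroying linear dependence. Suppose toward a contradiction that $F \in C_c^\infty(\mathbb H)$ is non-trivial and $\sum_{j=1}^n c_j L_{h_j}F = 0$ with the $c_j$ nonzero and the cosets $h_jZ$ distinct. Write $h_j = (z_j, y_j, x_j)$. First I would observe that, by the group law, left translation by $h_j$ acts on the $x$-variable as the shift $x \mapsto x - x_j$, so the functions $L_{h_j}F$ all have $x$-support contained in a common compact set, say inside $[-R, R]$ for suitable $R$. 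Pick a point $a \in \R$ with $a < -R$; then for every $h \in \mathbb H$, the "cut" operation of restricting a function to the region $\{x > a\}$ commutes with each $L_{h_j}$ on the span we care about, because all the relevant supports already lie to the right of $a$.

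The next step is to produce, from $F$, a \emph{non-zero} function $G$ that is Schwartz in $(y,x)$, supported on a half-line in $x$, and still satisfies $\sum_j c_j L_{h_j} G = 0$. The natural candidate is $G = \chi \cdot F$ where $\chi(z,y,x) = \eta(x)$ and $\eta \in C^\infty(\R)$ is a smooth function that equals $1$ on $[-R,R]$ and is supported on the half-line $[a, \infty)$; but one must be careful that multiplication by $\eta(x)$ does not commute with $L_{h_j}$ in general. Instead I would run the argument at the level of the representation: by Lemma \ref{square_cont} (or rather the machinery of $\pi_k(F)$ already set up), one has for each nonzero $k$ that $\pi_k(L_{h_j}F) = \pi_k(h_j)\pi_k(F)$, and the kernel of $\pi_k(F)$ is $K_k^F(t,x) = \mathcal F_1\mathcal F_2 F(k,-kt,t-x)$, which is Schwartz in $(t,x)$ because $F$ is $C_c^\infty$. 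Since $F \ne 0$ lies in $\mathcal K$ (reducing via Proposition \ref{second} as in Lemma \ref{Converse}), Proposition \ref{Plancherel extra-light} gives a $k$ with $\pi_k(F) \ne 0$; choose $\phi \in C_c(\R)$ with $\pi_k(F)\phi \ne 0$. The function $\psi = \pi_k(F)\phi$ is then Schwartz. The point is that $\psi$ need not be supported on a half-line, so this alone only recovers the content of Lemma \ref{Converse}, not Corollary \ref{compact}; the extra hypothesis in Proposition \ref{half_line} about half-line support in $x$ must be fed in at the level of $F$ itself.

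Consequently the cleanest route is the support reduction on $\mathbb H$ after all: choose the smooth cutoff $\eta$ as above and consider $G := \eta(x) F(z,y,x)$. Because $\operatorname{supp}_x(L_{h_j}F) \subseteq [-R,R] \subseteq \{\eta = 1\}$ for every $j$, we get $\eta(x) \cdot L_{h_j}F = L_{h_j}F$ pointwise, and hence $L_{h_j}G$ and $L_{h_j}F$ agree wherever $\eta(x - x_j) = 1$; more directly, summing, $\sum_j c_j L_{h_j} G$ is obtained from $\sum_j c_j L_{h_j}F = 0$ by restricting supports that were already contained in the region where all cutoffs are trivial — so $\sum_j c_j L_{h_j}G = 0$. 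Moreover $G$ is non-zero (it equals $F$ on the set $\{-R < x < R\}$, which meets $\operatorname{supp}F$), it is still $C_c^\infty$ hence Schwartz in $(y,x)$, and by construction $\operatorname{supp}_x G \subseteq [a,\infty)$, a half-line. This contradicts Proposition \ref{half_line}, completing the proof.

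The main obstacle I anticipate is precisely the bookkeeping in the previous paragraph: verifying that the cutoff $\eta(x)$ can be inserted so that $\sum_j c_j L_{h_j}G = 0$ holds \emph{exactly} and not merely approximately. The subtlety is that $\eta(x)$ multiplication does not intertwine with $L_{h_j}$, so one cannot simply write $L_{h_j}G = \eta(x) L_{h_j}F$ after a change of variable; rather one must track that each summand $c_j L_{h_j}F$ is supported in $\{|x| \le R\}$, on which $\eta \equiv 1$, so that multiplying the identity $\sum_j c_j L_{h_j}F = 0$ by the single function $\eta(x)$ yields $0 = \eta(x)\sum_j c_j L_{h_j}F = \sum_j c_j\, \eta(x) L_{h_j}F = \sum_j c_j L_{h_j}G$, where the last equality uses that $\eta(x) F(h_j^{-1}h)$ and $G(h_j^{-1}h) = \eta(x_j\text{-shifted argument}) F(h_j^{-1}h)$ coincide on the support — this needs one line of care about which $x$-variable $\eta$ is evaluated at. Everything else (non-vanishing of $G$, Schwartz and half-line properties) is immediate, and the quotation of Proposition \ref{half_line} finishes it.
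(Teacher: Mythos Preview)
Your argument ultimately lands on the right conclusion, but it circles around a one-line observation that makes the corollary immediate and that the paper clearly has in mind when it says ``a straightforward application of Proposition \ref{half_line}'': a function in $C_c^\infty(\mathbb H)$ has compact $x$-support, say contained in $[-R_0,R_0]$, and $[-R_0,R_0]\subset [-R_0,\infty)$ is already a half-line. Since $C_c^\infty$ functions are trivially Schwartz in $(y,x)$, the hypotheses of Proposition \ref{half_line} are satisfied by $F$ itself, and there is nothing more to do.

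In particular, your cutoff construction is vacuous: with $\eta\equiv 1$ on $[-R,R]\supseteq \operatorname{supp}_x F$, you have $G=\eta(x)F=F$ identically, so every worry you raise about whether multiplication by $\eta(x)$ intertwines with $L_{h_j}$ evaporates --- $L_{h_j}G=L_{h_j}F$ on the nose, not merely on supports. The detour through $\pi_k(F)$, kernels, and Lemma \ref{Converse} is likewise unnecessary here. So the proof is correct, and in substance it is the paper's approach (reduce to Proposition \ref{half_line}); you have simply not noticed that the reduction is trivial.
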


\

\begin{proposition}\label{commute}
	Let $A$ be an invertible operator in $\mathcal{C}(L).$ Then Conjecture \ref{HT} holds for non-trivial functions of the type
	$AF$ where $F$ is Schwartz in the $\left(  y,x\right)  $-variable and is
	supported on a half-line in the $x$-variable. 
\end{proposition}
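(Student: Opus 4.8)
The plan is to transfer the linear-dependence relation through the operator $A^{-1}$, which also lies in $\mathcal{C}(L)$ because $\mathcal{C}(L)$ is a von Neumann algebra, hence inverse-closed. Suppose for contradiction that $\sum_{j=1}^n c_j L_{h_j}(AF) = 0$ for nonzero scalars $c_1,\dots,c_n$ and distinct cosets $h_1Z,\dots,h_nZ$, where $F$ is nontrivial, Schwartz in the $(y,x)$-variable, and supported on a half-line in the $x$-variable. The key observation is that $A$ commutes with every left translation $L_h$, so $L_{h_j}(AF) = A(L_{h_j}F)$, and therefore
$$
0 = \sum_{j=1}^n c_j L_{h_j}(AF) = A\Bigl(\sum_{j=1}^n c_j L_{h_j}F\Bigr).
$$
Since $A$ is invertible, applying $A^{-1}$ yields $\sum_{j=1}^n c_j L_{h_j}F = 0$. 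But this is precisely a nontrivial linear-dependence relation among left translates of $F$ along distinct cosets, and $F$ is of the type covered by Proposition \ref{half_line}; this contradicts Proposition \ref{half_line} (note $F \ne 0$, since otherwise $AF = 0$ as well, contrary to $AF$ being nontrivial).

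The main point to verify carefully is that $A^{-1} \in \mathcal{C}(L)$, i.e. that $\mathcal{C}(L)$ contains inverses of its invertible elements. This is immediate from the fact, noted earlier in the text, that $\mathcal{C}(L)$ is a von Neumann algebra: if $A \in \mathcal{C}(L)$ is invertible in $\mathcal{B}(L^2(\mathbb{H}))$, then for each $h$ one has $L_h A = A L_h$, and multiplying on both sides by $A^{-1}$ gives $A^{-1} L_h = L_h A^{-1}$, so $A^{-1} \in \mathcal{C}(L)$; one does not even need the von Neumann algebra structure for this elementary step, only that $A^{-1}$ exists as a bounded operator.

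There is essentially no obstacle here; the statement is a soft consequence of Proposition \ref{half_line} together with the commutation property defining $\mathcal{C}(L)$. The only mild subtlety is the bookkeeping that $AF \ne 0$ forces $F \ne 0$, which is trivial, and that the cosets $h_jZ$ remain distinct — which is unchanged since the $h_j$ are not altered. One could phrase the whole argument in one short paragraph, but the conceptual content worth highlighting is that invertible intertwiners of the left regular representation automatically preserve the Heisenberg-Translate property for any class of functions closed under them, so Proposition \ref{half_line} propagates to the $\mathcal{C}(L)$-orbit of such functions.
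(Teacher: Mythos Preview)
Your argument is correct and matches the paper's proof essentially line for line: commute $A$ past each $L_{h_j}$, use invertibility of $A$ to conclude $\sum_j c_j L_{h_j}F=0$, and invoke Proposition~\ref{half_line} for the contradiction. The discussion of $A^{-1}\in\mathcal{C}(L)$ is harmless but unnecessary, since you only need that $A$ has trivial kernel.
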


\begin{proof}
	Suppose that $\sum_{j=1}%
	^{n}c_{j}L_{h_{j}}AF=0$ for some nonzero scalars
	$c_{1},\cdots,c_{n}$ and distinct cosets $h_1Z, \dots , h_nZ.$ Since $A$ commutes with the operators $L_{h_{j}},$ the
	vector $\sum_{j=1}%
	^{n}c_{j}L_{h_{j}}F$ must be in the kernel of $A.$ The fact that
	$A$ is invertible implies that $\sum_{j=1}%
	^{n}c_{j}L_{h_{j}}F=0.$ However, $F$
	is Schwartz in the $\left(  y,x\right)  $-variable and supported on a
	half-line in the $x$-variable. This contradicts Proposition \ref{half_line}. 
\end{proof}

\

We conclude our work by giving an example describing a large class of functions for which Conjecture \ref{HT} holds. 

\begin{example} Given complex numbers $c_1,\cdots c_n,$ it is easy to verify that
\[
A=e^{\sum_{j=1}%
	^{n}c_{j}R_{h_{j}}}=\sum_{k=0}^{\infty}\frac{\left( \sum_{j=1}%
	^{n}c_{j}R_{h_{j}}\right)  ^{k}}{k!}%
\]
is an invertible operator in $\mathcal{C}(L)$. In light of Proposition \ref{commute}, the following is immediate.  The Heisenberg-Translate
	Conjecture holds for any non-zero function of the type $AF$
	where $F$ is Schwartz in the $\left(  y,x\right)  $-variable and supported on
	a half-line in the $x$-variable.
	\end{example}
	
	\section*{Acknowledgements} 
	The second author thanks Professor Kasso Okoudjou for enlightening discussions on the connection between the HRT and HT conjectures.

\end{document}